\newtheorem{theorem}{Theorem}[section]
\newtheorem{corollary}[theorem]{Corollary}
\newtheorem{lemma}[theorem]{Lemma}
\newtheorem{proposition}[theorem]{Proposition}
\newtheorem{example}[theorem]{Example}
\newtheorem{remark}[theorem]{Remark}
\newcommand{\ra}{\rightarrow}
\newcommand{\Ff}{\mathcal{F}}
\newcommand{\Pp}{\mathcal{P}}
\def\AA{{\mathbb A}}
\def\NN{{\mathbb N}}
\def\CC{{\mathbb C}}
\def\ZZ{{\mathbb Z}}
\begin{document}

\sloppy

\title[Path, Incidence coalgebras and quantum groups]{Path subcoalgebras,  finiteness properties and quantum groups}

\subjclass[2010]{16T15, 16T05, 05C38, 06A11, 16T30}
\keywords{incidence coalgebra, path coalgebra, co-Frobenius
coalgebra, quasi-co-Frobenius coalgebra, balanced bilinear form,
 quantum group, integral}

\begin{abstract}
We study subcoalgebras of path coalgebras that are spanned by paths
(called path subcoalgebras) and subcoalgebras of incidence
coalgebras, and propose a unifying approach for these classes. We
discuss the left quasi-co-Frobenius  and the left co-Frobenius
properties for these coalgebras. We classify the left co-Frobenius
path subcoalgebras, showing that they are direct sums of certain
path subcoalgebras arising from the infinite line quiver or from
cyclic quivers. We investigate which of the co-Frobenius path
subcoalgebras can be endowed with Hopf algebra structures, in order
to produce some quantum groups with non-zero integrals, and we
classify all these structures over a field with primitive roots of
unity of any order. These turn out to be liftings of quantum lines
over certain not necessarily abelian groups.%\thanks{corresponding author}
\end{abstract}

\author{S.D\u{a}sc\u{a}lescu${}^{1,*}$, M.C. Iovanov${}^{1,2}$, C. N\u{a}st\u{a}sescu${}^1$}
\address{${}^1$University of Bucharest, Facultatea de Matematica si Informatica\\ Str.
Academiei 14, Bucharest 1, RO-010014, Romania}
\address{${}^2$University of Southern California, 3620 S Vermont Ave, KAP 108, Los Angeles, CA 90089, USA;} %\;\;\;\;\;\;\;\;\;\;\;\;\;\;\;\;\;\;\;\;\;\;\;\;\;\;\;\;\;\;\;\;\;\;\;\;\;\;\;\;\;\;\;\;\;\;\;\;\;\;\;\;\;\;\;\;\;\;\;\;\;\;\;\;\;\;\;\;\;\;
\address{e-mail: sdascal@fmi.unibuc.ro, yovanov@gmail.com,Constantin\_nastasescu@yahoo.com} %\;\;\;\;\;\;\;\;\;\;\;\;\;\;\;\;\;\;\;\;\;\;\;\;\;\;\;\;\;\;\;\;\;\;\;\;\;\; 
\thanks{${}^*$ corresponding author}

\date{}

\maketitle

\section{Introduction and Preliminaries}

Let $K$ be an arbitrary field. A quadratic algebra  is a quotient of
a free noncommutative algebra $K<x_1,\dots,x_n>$ in $n$ variables by
an ideal $I$ generated by elements of degree $2$. The usual
commutative polynomial ring is such an example, with $I$ generated
by $x_ix_j-x_jx_i$. Quadratic algebras are important in many places
in mathematics, and one relevant class of such objects consists of
Koszul algebras and Koszul duals of  quadratic algebras. More
generally, one can consider quotients $K<x_1,\ldots ,x_n>/I$ for
ideals $I$ generated by homogeneous elements. Several algebras occur
in this way in topology, noncommutative geometry, representation
theory, or theoretical physics (see the examples and references in
\cite{berger2}). Such are the cubic Artin-Schreier regular algebras
$\CC<x,y>/(ay^2x+byxy+axy^2+cx^3,ax^2y+bxyx+ayx^2+xy^3)$ in
noncommutative projective algebraic geometry (see \cite{artins}),
the skew-symmetrizer killing algebras
$\CC<x_1,\dots,x_n>/(\sum\limits_{\sigma\in\Sigma_p}{\rm
sgn}(\sigma)x_{i_{\sigma(1)}}\dots x_{i_{\sigma(p)}})$ (the ideal we
factor by has $n\choose p$ generators, each one corresponding to
some fixed $1\leq i_1<\ldots <i_p\leq n$) for a fixed $2\leq p\leq
n$, in representation theory (see \cite{berger}), or the Yang-Mills
algebras
$\CC<\nabla_0,\dots,\nabla_n>/(\sum_{\lambda,\mu}g^{(\lambda,\mu)}[\nabla_\lambda[\nabla_\mu,\nabla_\nu]])$
(with $(g^{(\lambda,\mu)})_{\lambda,\mu}$ an invertible symmetric
real matrix, and the ideal we factor by has $n+1$ generators, as
$0\leq \nu\leq n$) in theoretical physics (see \cite{cdb}), to name
a few. More generally, one could start with a quiver $\Gamma$, and
define path algebras with relations by taking quotients of the path
algebra $K[\Gamma]$ by an ideal (usually) generated by homogeneous
elements, which are obtained as linear combinations of paths of the
same length. Note that the examples above are of this type: the free
algebra with $n$ elements can be thought as the path algebra of the
quiver $\Gamma$ with one vertex $1$ (which becomes the unit in the
algebra) and $n$ arrows $x_1,\dots,x_n$ starting and ending at $1$;
the relations are then given by linear combinations of paths of the
same length. This approach, for example, allows the generalization
of N-Koszulity to quiver algebras with relations, see \cite{gmmz}.

\vspace{.5cm}

We aim to study a general situation which is dual to the ones above,
but is also directly connected to it. If $\Gamma$ is a quiver, the
path algebra $K[\Gamma]$ of $\Gamma$ plays an important role in the
representation theory of $\Gamma$. The underlying vector space of
the path algebra also has a coalgebra structure, which we denote by
$K\Gamma$ and call the path coalgebra of $\Gamma$. One motivation
for replacing path algebras by path coalgebras is the following:
given an algebra $A$, and its  category of  finite dimensional
representations, one is often lead to considering the category
${{Ind}}(A)$ generated by all these finite dimensional
representations (direct limits of finite dimensional
representations). ${{Ind}}(A)$ is well understood as the category of
comodules over  the finite dual coalgebra $A^0$ of $A$ (also called
the algebra of representative functions on $A$), and it cannot be
regarded as a full category of modules over a ring unless $A$ is
finite dimensional. Such situations extend beyond the realm of pure
algebra, encompassing representations of compact groups, affine
algebraic groups or group schemes, differential affine groups, Lie
algebras and Lie groups, infinite tensor categories etc.

\vspace{.5cm}

Another reason for which the study of path coalgebras is interesting
is that any pointed coalgebra embeds into the path coalgebra of the
associated Gabriel quiver, see \cite{ni}, \cite{cm}. On the other hand, 
if $X$ is a locally finite partially
ordered set, the incidence coalgebra $KX$ provides a good framework
for interpreting several combinatorial problems in terms of
coalgebras, as explained by Joni and Rota in \cite{jr}. There are
several features common to path coalgebras and incidence coalgebras.
They are both pointed, the group-like elements recover the vertices
of the quiver, respectively the points of the ordered set, the
injective envelopes of the simple comodules have similar
descriptions, etc. Moreover, as we show later in Section \ref{snew},
Proposition \ref{propembedding}, any incidence coalgebra  embeds in
a path coalgebra, and in many situations, it has a basis where each
element is a sum of paths of the same length. We note that this is
precisely the dual situation to that considered above: for algebras,
one considers a path algebra with homogeneous relations, that is
$K[\Gamma]$ quotient out by an ideal generated by homogeneous
elements, i.e. sums of paths of the same length, with coefficients.
For a coalgebra, one considers subcoalgebras of the path coalgebra
of $\Gamma$ such that the coalgebra has a basis consisting of linear
combinations of paths of the same length (``homogeneous'' elements;
more generally, a coalgebra generated by such elements).

\vspace{.5cm}

In this paper we study Frobenius type properties for path
coalgebras, incidence coalgebras and certain subcoalgebras of them.
Recall that a coalgebra $C$ is called left co-Frobenius if $C$
embeds in $C^*$ as a left $C^*$-module. Also, $C$ is called left
quasi-co-Frobenius if $C$ embeds in a free module as a left
$C^*$-module. The (quasi)-co-Frobenius properties are interesting
for at least three reasons. Firstly, coalgebras with such properties
have rich representation theories. Secondly, for a Hopf algebra $H$,
it is true that $H$ is left quasi-co-Frobenius if and only if $H$ is
left co-Frobenius, and this is also equivalent to $H$ having
non-zero left (or right) integrals. Co-Frobenius Hopf algebras are
important since they generalize the algebra of representative
functions $R(G)$ on a compact group $G$, which is a Hopf algebra
whose integral is the left Haar integral of $G$.  Moreover, more
recent generalizations of these have been made to compact and
locally compact quantum groups (whose representation categories are
not necessarily semisimple). Thus co-Frobenius coalgebras may be the
underlying coalgebras for interesting quantum groups with non-zero
integrals. Thirdly, by keeping in mind the duality with Frobenius
algebras in the finite dimensional case, co-Frobenius coalgebras
have connections to topological quantum field theory.

\vspace{.5cm}

We propose an approach leading to similar results for path
coalgebras and incidence coalgebras, and which also points out the
similarities between these as mentioned above. It will follow from
our results that a path coalgebra (or an incidence coalgebra) is
left (quasi)-co-Frobenius if and only if the quiver consists only of
isolated points, i.e. the quiver does not have arrows (respectively
the order relation is the equality). Thus the left co-Frobenius
coalgebras arising from path coalgebras or incidence coalgebras are
just grouplike coalgebras. In order to discover more interesting
left co-Frobenius coalgebras, we focus our attention to classes of
coalgebras larger than just path coalgebras and incidence
coalgebras. On one hand we consider subcoalgebras of path coalgebras
which have a linear basis consisting of paths. We call these {\it
path subcoalgebras}. On the other hand, we look at subcoalgebras of
incidence coalgebras; any such coalgebra has a basis consisting of
segments.  In Section \ref{sectionbilforms} we apply a classical
approach to the (quasi)-co-Frobenius property. It is known that a
coalgebra $C$ is left co-Frobenius if and only if there exists a
left non-degenerate $C^*$-balanced bilinear form on $C$. Also, $C$
is left quasi-co-Frobenius if and only if there exists a family
$(\beta_i)_{i\in I}$ of $C^*$-balanced bilinear forms on $C$ such
that for any non-zero $x\in C$ there is $i\in I$ with
$\beta_i(x,C)\neq 0$.  We describe the balanced bilinear forms on
path subcoalgebras and subcoalgebras of incidence coalgebras. Such a
description was given in \cite{DNV} for the full incidence
coalgebra, and in \cite{br} for certain matrix-like coalgebras. In
Section \ref{s3} we use this description and an approach using the
injective envelopes of the simple comodules to show that a coalgebra
lying in one of the two classes is left quasi-co-Frobenius if and
only if it is left co-Frobenius, and to give several equivalent
conditions including combinatorial ones (just in terms of paths of
the quiver, or segments of the ordered set).

In Section \ref{s4} we classify all possible left co-Frobenius path
subcoalgebras. We construct some classes of left co-Frobenius
coalgebras $K[\AA_{\infty},r]$ and $K[\AA_{0,\infty},r]$ starting
from the infinite line quiver $\AA_\infty$, and a class of left
co-Frobenius coalgebras $K[\CC_n,s]$ starting from cyclic quiver
$\CC_n$. Our result says that any left co-Frobenius path
subcoalgebra is isomorphic to a direct sum of coalgebras of types
$K[\AA_{\infty},r]$, $K[\AA_{0,\infty},r]$, $K[\CC_n,s]$ or $K$,
with special quivers $\AA_{\infty}, \AA_{0,\infty},\CC_n$ and $r,s$
being certain general types of functions on these quivers. For
subcoalgebras of incidence coalgebras we do not have a complete
classification in the left co-Frobenius case. We show in Section
\ref{snew} that more complicated examples than the ones in the path
subcoalgebra case can occur for subcoalgebras of incidence
coalgebras, and a much larger class of such coalgebras is to be
expected. Also, we give several examples of co-Frobenius
subcoalgebras of path coalgebras, which are not path subcoalgebras,
and moreover, examples of pointed co-Frobenius coalgebras which are
not isomorphic to any one of the above mentioned classes.  In
Section \ref{s6} we discuss the possibility of defining Hopf algebra
structures on the path subcoalgebras that are left and right
co-Frobenius, classified in Section \ref{s4}. The main reason for
asking this question is the interest in constructing quantum groups
with non-zero integrals, whose underlying coalgebras are path
subcoalgebras. We answer completely this question  in the case where
$K$ contains primitive roots of unity of any positive order. Thus we
determine all possible co-Frobenius path subcoalgebras admitting a
Hopf algebra structure. Moreover, we describe up to an isomorphism
all such Hopf algebra structures. It turns out that they are
liftings of quantum lines over certain not necessarily abelian
groups. In particular, this also answers the question of finding the
Hopf algebra structures on finite dimensional path subcoalgebras and
on quotients of finite dimensional path algebras by ideals spanned
by paths. Our results contain, as particular cases, some results of
\cite{chyz}, where finite quivers $\Gamma$ and finite dimensional
path subcoalgebras $C$ of $K\Gamma$ are considered, such that $C$
contains all vertices and arrows of $\Gamma$. The co-Frobenius
coalgebras of this type are determined, and  all Hopf algebra
structures on them are described in \cite{chyz}. These results
follow from our more general Theorem \ref{th.qcf} and Theorem
\ref{teoremastructuriHopf}. We note that Hopf algebra structures on
incidence coalgebras have been of great interest for combinatorics,
see for example \cite{sch}, \cite{af}. We also note that the
classification of path coalgebras that admit a graded Hopf algebra
structure was done in \cite{cr}, see also \cite{gs} for a different
point of view on Hopf algebra structures on path algebras. In
particular, some of the examples in the classification have deep
connections with homological algebra: the monoidal category of chain
$s$-complexes of vector spaces over $K$ is monoidal equivalent to
the category of comodules of $K[\AA_\infty|s]$, a subclass of the
 Hopf algebras classified here (\cite{IG, B}).

\vspace{.5cm}

We also note that the unifying approach we propose here seems to
suggest that in general for pointed coalgebras interesting methods
and results could be obtained provided one can find some suitable
bases with properties resembling those of paths in quiver algebras
or segments in incidence coalgebras.

\vspace{.5cm}

Throughout the paper $\Gamma=(\Gamma_0,\Gamma_1)$ will be a quiver.
$\Gamma_0$ is the set of vertices, and $\Gamma_1$ is the set of
arrows of $\Gamma$. If $a$ is an arrow from the vertex $u$ to the
vertex $v$, we denote $s(a)=u$ and $t(a)=v$. A path in $\Gamma$ is a
finite sequence of arrows $p=a_1a_2\ldots a_n$, where $n\geq 1$,
such that $t(a_i)=s(a_{i+1})$ for any $1\leq i\leq n-1$. We will
write $s(p)=s(a_1)$ and $t(p)=t(a_n)$. Also the length of such a $p$
is ${\rm length}(p)=n$. Vertices $v$ in $\Gamma_0$ are also
considered as paths of length zero, and we write $s(v)=t(v)=v$. If
$q$ and $p$ are two paths such that $t(q)=s(p)$, we consider the
path $qp$ by taking the arrows of $q$ followed by the arrows of $p$.
We denote by $K\Gamma$ the path coalgebra, which is the vector space
with a basis consisting of all paths in $\Gamma$, and
comultiplication $\Delta$  defined by $\Delta(p)=\sum
_{qr=p}q\otimes r$ for any path $p$, and  counit $\epsilon$ defined
by $\epsilon(v)=1$ for any vertex $v$, and $\epsilon(p)=0$ for any
path of positive length. In particular, the arrows $x$ between two
vertices $v$ and $w$, i.e. $s(x)=v, t(x)=w$, are the nontrivial
elements of $P_{w,v}$, the space of $(w,v)$-skew-primitive elements:
$\Delta(x)=v\otimes x + x\otimes w$. When we use Sweedler's sigma
notation $\Delta(p)=\sum p_1\otimes p_2$ for a path $p$, we always
take representations of the sum such that all
$p_1$'s and $p_2$'s are paths.\\
We also consider  partially ordered sets $(X,\leq)$ which are
locally finite, i.e. the interval $[x,y]=\{z|\; x\leq z\leq y\}$ is
finite for any $x\leq y$. The incidence $K$-coalgebra of $X$,
denoted by $KX$, is the $K$-vector space with basis $\{
e_{x,y}|x,y\in X, x\leq y\}$, and comultiplication $\Delta$ and
counit $\epsilon$ defined by
$$\Delta(e_{x,y})=\sum_{x\leq z\leq y}e_{x,z}\otimes e_{z,y}$$
$$\epsilon (e_{x,y})=\delta_{x,y}$$
for any $x,y\in X$ with $x\leq y$, where by $\delta_{x,y}$ we denote
Kronecker's delta. The elements $e_{x,y}$ are called segments.
Again, when we use Sweedler's sigma notation $\Delta(p)=\sum
p_1\otimes p_2$ for a segment $p$, we always take representations of
the sum such that all $p_1$'s and $p_2$'s are segments. Recall that
the length of a segment $e_{x,y}$ is the maximum length $n$ of a
chain $x=z_0<z_1<\dots<z_n=y$\\ For basic terminology and notation
about coalgebras and Hopf algebras we refer to \cite{DNR} and
\cite{mo}.

\section{Balanced bilinear forms for path subcoalgebras and for subcoalgebras of incidence
coalgebras}\label{sectionbilforms}

In the rest of the paper we will be interested in two classes of
coalgebras more general than path coalgebras and incidence
coalgebras. Thus we will study

\vspace{.4cm}

$\bullet$ Subcoalgebras of the path coalgebra $K\Gamma$ having a
basis $\mathcal{B}$ consisting of paths in $\Gamma$. Such a
coalgebra will be called a path subcoalgebra. Note that if $p\in
\mathcal{B}$, then any subpath of $p$, in particular any vertex
involved in $p$, lies in $\mathcal{B}$.\\
$\bullet$ Subcoalgebras of the incidence coalgebra $KX$. By
\cite[Proposition 1.1]{DNV}, any such subcoalgebra has a basis
$\mathcal{B}$ consisting of segments $e_{x,y}$, and moreover, if
$e_{x,y}\in \mathcal{B}$ and $x\leq a\leq b\leq y$, then $e_{a,b}\in
\mathcal{B}$.

\vspace{.4cm}

It is clear that for a coalgebra $C$ of one of these two types, the
distinguished basis $\mathcal{B}$ consists of all paths (or
segments) which are elements of $C$. Let $C$ be a coalgebra of one
of these two types, with basis $\mathcal{B}$ as above. When we use
Sweedler's sigma notation $\Delta (p)=\sum p_1\otimes p_2$ for $p\in
\mathcal{B}$, we always consider representations of the sum such
that all $p_1$'s and $p_2$'s are in $\mathcal{B}$.

A bilinear form $\beta:C\times C\rightarrow K$ is $C^*$-balanced if
\begin{equation}\label{eq*}
\sum \beta(p_2,q)p_1=\sum \beta(p,q_1)q_2 \;\;\;\;\;\; {\rm for}\;
{\rm any}\; p,q\in \mathcal{B}
\end{equation}
It is clear that (\ref{eq*}) is equivalent to the fact that for any
$p,q\in \mathcal{B}$, the following three conditions hold.

\begin{eqnarray}
\beta(p_2,q)& = & \beta(p,q_1)\,\,\,\,\,{\rm for\,those\,of\,the\,}p_2{\rm's\,and\,the\,}q_1{\rm's\,such\,that\,}p_1=q_2\label{eq.d1}\\
\beta(p_2,q) & = & 0\,\,\,\,\,{\rm for\,those\,}p_2{\rm 's\,for\,which\,}p_1{\rm\,is\,not\,equal\,to\,any\,}q_2\label{eq.d2}\\
\beta(p,q_1) & = & 0\,\,\,\,\,{\rm for\,those\,}q_1{\rm
's\,for\,which\,}q_2{\rm
\,is\,not\,equal\,to\,any\,}p_1\label{eq.d3}
\end{eqnarray}

In the following two subsections we discuss separately path
subcoalgebras and subcoalgebras of incidence coalgebras.

\subsection{Path subcoalgebras}

In this subsection we consider the case where $C$ is a path
subcoalgebra. We note that if $\Gamma$ is acyclic, then for any
paths $p$ and $q$ there is at most a pair $(p_1,q_2)$ (in
(\ref{eq*})) such
that $p_1=q_2$. \\
Denote by $\Ff$ the set of all paths $d$ satisfying the following
three properties\\
$\bullet$ $d=qp$ for some $q,p\in \mathcal{B}$.\\
$\bullet$ For any representation $d=qp$ with $q,p\in \mathcal{B}$,
and any arrow $a\in\Gamma_1$, if $ap\in \mathcal{B}$ then $q$ must
end with
$a$.\\
$\bullet$ For any representation $d=qp$ with $q,p\in \mathcal{B}$,
and any arrow $b\in\Gamma_1$, if $qb\in \mathcal{B}$ then $p$ starts
with $b$.

\vspace{.4cm}

Now we are able to describe all balanced bilinear forms on $C$.

\begin{theorem} \label{formebilpath}
A bilinear form $\beta:C\times C\rightarrow K$ is $C^*$-balanced if
and only if there is a family of scalars $(\alpha_d)_{d\in\Ff}$ such
that for any $p,q\in \mathcal{B}$
\begin{eqnarray*}
\beta(p,q) & = & \left\{
\begin{array}{l}
    \alpha_d,\,\,\,{\rm if\,}s(p)=t(q){\rm\,and\,}qp=d\in\Ff\\
    0,\,\,\,\;\;{\rm otherwise}
\end{array}\right.
\end{eqnarray*}
In particular the set of all $C^*$-balanced bilinear forms on $C$ is
in bijective correspondence to $K^\Ff$.
\end{theorem}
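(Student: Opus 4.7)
The plan is to reduce the balanced condition to the three elementary relations (\ref{eq.d1})--(\ref{eq.d3}) and to exploit the closure of $\Bb$ under subpaths. Given a balanced $\beta$, I first want to show $\beta(p,q)=0$ unless $s(p)=t(q)$: take the splitting $\Delta(p)\ni s(p)\otimes p$; its prefix is the vertex $s(p)$, and the only vertex appearing among the $q_2$'s is $t(q)$ (corresponding to $q_1=q$), so (\ref{eq.d2}) forces $\beta(p,q)=0$ whenever $s(p)\neq t(q)$. Next, when $s(p)=t(q)$, I claim $\beta(p,q)$ depends only on $d=qp$: given another representation $d=q'p'$ with, say, $\mathrm{length}(q)\le\mathrm{length}(q')$, write $q'=qr$ and $p=rp'$ for a path $r$ which is a subpath of $p\in\Bb$, hence itself in $\Bb$; applying (\ref{eq.d1}) to the pair $(p,q')$ with $p_1=r=q'_2$ yields $\beta(p,q)=\beta(p',q')$. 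Finally I want $\beta(p,q)=0$ whenever $d\notin\Ff$: since (i) is trivial, some representation $d=q^*p^*$ in $\Bb$ violates (ii) or (iii); if (ii) fails with witness $a$ (so $ap^*\in\Bb$ but $q^*$ does not end in $a$), applying (\ref{eq.d2}) to the pair $(ap^*,q^*)$ with the splitting $a\otimes p^*$ of $\Delta(ap^*)$ shows $\beta(p^*,q^*)=0$, and by the preceding step this equals $\beta(p,q)$; the failure of (iii) is dual via (\ref{eq.d3}). Setting $\alpha_d:=\beta(p,q)$ is then unambiguous and produces the stated formula.

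For the converse I would define $\beta$ by the displayed formula and verify the three relations on basis elements. Relation (\ref{eq.d1}) is automatic: the hypothesis $p_1=q_2$ forces $s(p_2)=t(q)$, $t(q_1)=s(p)$, and $qp_2=q_1q_2p_2=q_1p_1p_2=q_1p$ as paths, so both sides equal $\alpha_{q_1p}$. For (\ref{eq.d2}), assume $p=p_1p_2$ with $p_1$ not a suffix of $q$; the case $s(p_2)\neq t(q)$ is trivial, so suppose $s(p_2)=t(q)$, whence $p_1$ has length $\ge 1$ (length zero would force $p_1=s(p)=t(q)$, a suffix of $q$). Let $r$ be the longest common suffix of $p_1$ and $q$, and write $p_1=p_1^{(k)}r$, $q=q^{(k)}r$ where $p_1^{(k)}$ has length $\ge 1$ (else $p_1=r$ would be a suffix of $q$) and ends in some arrow $a$. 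Then $arp_2$ is a suffix of $p\in\Bb$ and so lies in $\Bb$, while $q^{(k)}$ cannot end in $a$ by maximality of $r$ (the case where $q^{(k)}$ is a vertex being automatic). Hence the splitting $qp_2=q^{(k)}\cdot(rp_2)$ in $\Bb$ violates clause (ii), so $qp_2\notin\Ff$ and $\beta(p_2,q)=0$ by the formula. The check of (\ref{eq.d3}) is symmetric. The bijection with $K^\Ff$ follows because the scalars $\alpha_d$ may be chosen independently and uniquely determine $\beta$.

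The main technical point, and what I expect to be the only non-routine step, is the longest common suffix argument (together with its dual) in the converse direction: it is what converts a bare prefix-suffix mismatch into an explicit arrow witnessing failure of clause (ii) or (iii) of $\Ff$. Everything else is a direct unwinding of (\ref{eq.d1})--(\ref{eq.d3}) along the basis $\Bb$.
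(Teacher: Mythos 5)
Your proposal is correct and follows essentially the same route as the paper: the forward direction uses (\ref{eq.d1}) with the overlap $r$ to show $\beta(p,q)$ depends only on $d=qp$ and (\ref{eq.d2})/(\ref{eq.d3}) applied to $(ap^*,q^*)$ type pairs to kill $\alpha_d$ for $d\notin\Ff$, and your ``longest common suffix'' argument in the converse is exactly the paper's choice of a maximal path $r$ with $p_1=er$ and $q=q'r$, with the terminal arrow of $e$ as the witness violating the second condition defining $\Ff$. The only cosmetic point is in your check of (\ref{eq.d1}): when $q_1p\notin\Ff$ the scalar $\alpha_{q_1p}$ is not defined and both sides are $0$, which is what your argument actually shows.
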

\begin{proof}
Assume that $\beta$ is $C^*$-balanced. If $p,q\in \mathcal{B}$ and
$t(q)\neq s(p)$, then $\beta(p,q)s(p)$ appears in the left-hand side
of (\ref{eq*}), but $s(p)$ does not show up in the right-hand side,
so $\beta(p,q)=0$.
%Let now $p,q,p',q'\in B$, $p\neq p'$, such that $t(q)=s(p)$, $t(q')=s(p')$ and $qp=q'p'$. Take for instance $length(p')< length(p)$.
Let $\Pp$ be the set of all paths in $\Gamma$ for which there are
$p,q\in \mathcal{B}$ such that $d=qp$. Let $d\in \Pp$ and let
$d=qp=q'p'$, $p,q,p',q'\in \mathcal{B}$ be two different
decompositions of $d$, and say that, for example, ${\rm
length}(p')<{\rm length}(p)$. Then there is a path $r$ such that
$p=rp'$ and $q'=qr$, and clearly $r\in \mathcal{B}$ since it is a
subpath of $q'\in B$. Use (\ref{eq.d1}) for $p$ and $q'$, for which
there is an equality $p_1=q'_2=r$ (and the corresponding $p_2=p'$
and $q'_1=q$), and find that $\beta(p',q')=\beta(p,q)$. Therefore,
for any $d\in \Pp$ (not necessarily in $\mathcal{B}$) and any
$p,q\in \mathcal{B}$ such that $d=qp$, the scalar $\beta(p,q)$
depends only on $d$. This shows that there is a family of scalars
$(\alpha_d)_{d\in \Pp}$ such that $\beta(p,q)=\alpha_d$ for any
$p,q\in \mathcal{B}$
with $qp=d$.\\
Let $d\in\Pp$ such that  $d=qp$ for some $p,q\in \mathcal{B}$, and
there is an arrow $a\in\Gamma_1$ with $ap\in \mathcal{B}$, but $q$
does not end with $a$. That is, $q$ is not of the form $q=ra$ for
some path $r\in \mathcal{B}$. We use (\ref{eq.d2}) for the paths
$ap\in \mathcal{B}$ and $q\in \mathcal{B}$, more precisely, for the
term $(ap)_1=a$, which cannot be equal to any of the $q_2$'s
(otherwise $q$ would end with $a$), and we see that
$\beta(p,q)=0$, i.e. $\alpha_d=0$.\\
Similarly, if $d\in\Pp$, $d=qp$ with $p,q\in \mathcal{B}$ and there
is
 $b\in \Gamma_1$ with $qb\in B$ and $p$ not of the form $br$ for some path $r$
 (i.e. $p$ does not start with $b$), then we use (\ref{eq.d3}) for $p$ and $qb$,
 and $(qb)_2=b$, and we find that $\beta(p,q)=0$, i.e. $\alpha_d=0$. In conclusion,
 $\alpha_d$ may be non-zero only for $d\in\Ff$.\\
Conversely, assume that $\beta$ is of the form indicated in the
statement. We show that (\ref{eq.d1}), (\ref{eq.d2}) and
(\ref{eq.d3}) are satisfied. Let $p,q\in \mathcal{B}$ be such that
$p_1=q_2=r$ for some $p_1$ and $q_2$ (from the comultiplication
$\sum p_1\otimes p_2$ of $p$ and, respectively, the comultiplication
$\sum q_1\otimes q_2$ of $q$). Then $p=rp'$ and $q=q'r$ for some
$p',q'\in \mathcal{B}$. Let $d=q'rp'$. If $d\in\Ff$, then
$\beta(p',q)=\beta(p,q')=\alpha_d$, while if $d\notin\Ff$ we have
that $\beta(p',q)=\beta(p,q')=0$ by definition. Thus (\ref{eq.d1})
holds. Now let $p,q\in \mathcal{B}$ and fix some $p_2$ (from the
comultiplication $\sum p_1\otimes p_2$ of $p$) such that the
corresponding $p_1$ is not equal to any $q_2$. If $s(p_2)\neq t(q)$,
then clearly $\beta(p_2,q_1)=0$ by the definition of $\beta$. If
$s(p_2)=t(q)$, then $d=qp_2\notin\Ff$. Indeed, let $r$ be a maximal
path such that $p_1=er$ for some path $e$ and $q$ ends with $r$, say
$q=q'r$. Note that $e$ has length at least $1$, since $p_1$ is not
equal to any of the $q_2$'s. Then the terminal arrow of $e$ cannot
be the terminal arrow of $q'$, and this shows that
$d=p_2q=(p_2r)q'\notin\Ff$. Then $\beta(p_2,q)=0$ and (\ref{eq.d2})
is satisfied. Similarly, (\ref{eq.d3}) is satisfied.
\end{proof}

\subsection{Subcoalgebras of incidence coalgebras}

In this subsection we assume that $C$ is a subcoalgebra of the
incidence coalgebra $KX$. Let $\mathcal{D}$ be the set of all pairs
$(x,y)$ of elements in $X$ such that $x\leq y$ and there exists $x'$
with $x\leq x'\leq y$ and $e_{x,x'},e_{x',y}\in \mathcal{B}$. Fix
$(x,y)\in \mathcal{D}$. Let
$$U_{x,y}=\{ u\;|\; x\leq u\leq y\;{\rm and}\; e_{x,u},e_{u,y}\in
\mathcal{B}\}$$ and define the relation $\sim$ on $U_{x,y}$ by
$u\sim v$ if and only if there exist a positive integer $n$, and
$u_0=u,u_1,\ldots,u_n=v$ and $z_1,\ldots,z_n$ in $U_{x,y}$, such
that $z_i\leq u_{i-1}$ and $z_i\leq u_i$ for any $1\leq i\leq n$. It
is easy to see that $\sim$ is an equivalence relation on $U_{x,y}$.
Let $U_{x,y}/\sim$ be the associated set of equivalence classes, and
denote by $(U_{x,y}/\sim)_0$ the set of all equivalence classes
$\mathcal{C}$ satisfying the following two
conditions.\\
$\bullet$ If $u\in \mathcal{C}$, and $v\in X$ satisfies $v\leq u$
and $e_{v,y}\in \mathcal{B}$, then $x\leq v$.\\
$\bullet$ If $u\in \mathcal{C}$, and $v\in X$ satisfies $u\leq v$
and $e_{x,v}\in \mathcal{B}$, then $v\leq y$.

\vspace{.4cm}

Now we can describe the balanced bilinear forms on $C$.

\begin{theorem} \label{formebilinc}
A bilinear form $\beta:C\times C\rightarrow K$ is $C^*$-balanced if
and only if there is a family of scalars
$(\alpha_{\mathcal{C}})_{\mathcal{C}\in \bigsqcup\limits_{(x,y)\in
\mathcal{D}}(U_{x,y}/\sim)_0}$ such that for any $e_{t,y},e_{x,z}\in
\mathcal{B}$
\begin{eqnarray*}
\beta(e_{t,y},e_{x,z}) & = & \left\{
\begin{array}{l}
    \alpha_{\mathcal{C}},\,\,\,{\rm if\,}(x,y)\in\mathcal{D}, z=t\in U_{x,y}{\rm\,and\,}{\rm the\,}{\rm class\,}\\
    \hspace{.8cm}\mathcal{C}{\rm\, of\,}z{\rm\, in\,}U_{x,y}/\sim {\rm\, is\,}{\rm\, in\,}(U_{x,y}/\sim)_0\\
    0,\,\,\,\;\;{\rm otherwise}
\end{array}\right.
\end{eqnarray*}
In particular the set of all $C^*$-balanced bilinear forms on $C$ is
in bijective correspondence to $K^{\bigsqcup\limits_{(x,y)\in
\mathcal{D}}(U_{x,y}/\sim)_0}$.
\end{theorem}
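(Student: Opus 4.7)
The plan is to mirror the strategy used in the proof of Theorem~\ref{formebilpath}. Applied to $p=e_{t,y}$ and $q=e_{x,z}$ in $\mathcal{B}$, the balanced condition (\ref{eq*}) reads
\begin{equation*}
\sum_{t\leq u\leq y}\beta(e_{u,y},e_{x,z})\,e_{t,u}\;=\;\sum_{x\leq v\leq z}\beta(e_{t,y},e_{x,v})\,e_{v,z}.
\end{equation*}
Since segments on the left all start at $t$ and those on the right all end at $z$, inspecting the coefficient of $e_{t,t}$ (which can appear on the right only when $z=t$) immediately forces $\beta(e_{t,y},e_{x,z})=0$ whenever $z\neq t$. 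This is the incidence analog of the ``$s(p)\neq t(q)$'' vanishing used in Theorem~\ref{formebilpath}.

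Granting this, for $t,z\in U_{x,y}$ with $t\leq z$ the only remaining common segment is $e_{t,z}$, and comparing its coefficient on the two sides gives $\beta(e_{t,y},e_{x,t})=\beta(e_{z,y},e_{x,z})$. Writing $f(u)=\beta(e_{u,y},e_{x,u})$, this says $f(u_0)=f(z_1)=f(u_1)$ whenever $z_1$ is a common lower bound of $u_0,u_1$ in $U_{x,y}$, and chasing along a $\sim$-chain shows $f$ is constant on each equivalence class in $U_{x,y}/\sim$. To force $f$ to vanish outside $(U_{x,y}/\sim)_0$, I would apply (\ref{eq*}) to $p=e_{v,y}$, $q=e_{x,u}$ with $u\in\mathcal{C}$, $v\leq u$, $e_{v,y}\in\mathcal{B}$, and $x\not\leq v$: using the first step, the left-hand side reduces to $\alpha_{\mathcal{C}}\,e_{v,u}$, while the right-hand side vanishes identically because no $s\in[x,u]$ can equal $v$. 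Hence $\alpha_{\mathcal{C}}=0$. The symmetric choice $p=e_{u,y}$, $q=e_{x,v}$ with $u\leq v$, $e_{x,v}\in\mathcal{B}$, and $v\not\leq y$ handles the second defining condition of $(U_{x,y}/\sim)_0$.

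For the converse, given an arbitrary family $(\alpha_{\mathcal{C}})$ I would define $\beta$ by the stated formula and verify (\ref{eq*}) directly for every pair $p=e_{t,y}$, $q=e_{x,z}$. All segments other than $e_{t,z}$ have vanishing coefficient on both sides by construction, so the verification reduces to matching the coefficient of $e_{t,z}$. The main technical point, which I expect to be the principal obstacle, is the case analysis showing that whenever one side contributes a nonzero $\alpha_{\mathcal{C}}$, the two defining properties of $(U_{x,y}/\sim)_0$ prevent the other side from being forced to zero for an asymmetric reason (for instance, $x\not\leq t$ when $z\in\mathcal{C}$ is precisely excluded by the first defining condition applied with $v=t$, and $z\not\leq y$ when $t\in\mathcal{C}$ by the second with $v=z$). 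Once no such orphans arise, $t$ serves as a common lower bound of $t$ and $z$ in $U_{x,y}$, so both lie in the same class $\mathcal{C}$ and the two coefficients agree.
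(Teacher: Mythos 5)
Your proposal is correct and takes essentially the same approach as the paper: you specialize the balancedness condition to the same pairs of segments to obtain constancy of $\beta(e_{u,y},e_{x,u})$ on $\sim$-classes and vanishing outside $(U_{x,y}/\sim)_0$, and you prove the converse by the same coefficient comparison reducing to the segment $e_{t,z}$. The only cosmetic difference is that you argue directly from (\ref{eq*}) after first making explicit the vanishing $\beta(e_{t,y},e_{x,z})=0$ for $z\neq t$, whereas the paper routes the same computations through the split conditions (\ref{eq.d1})--(\ref{eq.d3}).
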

\begin{proof}
Assume that $\beta$ is $C^*$-balanced. Fix some $x\leq y$ such that
$U_{x,y}\neq \emptyset$. We first note that if $x\leq z\leq t\leq y$
and $z,t\in U_{x,y}$, then by applying (\ref{eq.d1}) for $p=e_{z,y},
q=e_{x,t}$ and $p_1=q_2=e_{z,t}$, we find that
$\beta(e_{t,y},e_{x,t})=\beta(e_{z,y},e_{x,z})$. Now let $u,v\in
U_{x,y}$ such that $u\sim v$. Let $u_0=u,u_1,\ldots,u_n=v$ and
$z_1,\ldots,z_n$ in $U_{x,y}$, such that $z_i\leq u_{i-1}$ and
$z_i\leq u_i$ for any $1\leq i\leq n$. By the above
$\beta(e_{u_{i-1},y},e_{x,u_{i-1}})=\beta(e_{u_{i},y},e_{x,u_{i}})=\beta(e_{z_{i},y},e_{x,z_{i}})$
for any $i$, and this implies that
$\beta(e_{u,y},e_{x,u})=\beta(e_{v,y},e_{x,v})$. This shows that
$\beta(e_{u,y},e_{x,u})$ takes the same value for any $u$ in the
same
equivalence class in $U_{x,y}/\sim$.\\
Now assume that for some $u\in U_{x,y}$ there is $v\in X$, such that
$v\leq u$, $x\nleqslant v$ and $e_{v,y}\in \mathcal{B}$. Use
(\ref{eq.d2}) for $p=e_{v,y}$, $q=e_{x,u}$ and $p_1=e_{v,u}$. Note
that $p_1\neq q_2$ for any $q_2$. We get that
$\beta(e_{u,y},e_{x,u})=0$. \\
Similarly, if $u\in U_{x,y}$, and there is $v\in X$ such that $u\leq
v$, $v\nleqslant y$ and $e_{x,v}\in \mathcal{B}$, then using
(\ref{eq.d3}) for $p=e_{u,y}, q=e_{x,v}$ and $q_2=e_{u,v}$, we find
that $\beta(e_{u,y},e_{x,u})=0$. We have thus showed that
$\beta$ has the desired form. \\
Conversely, assume that $\beta$ has the indicated form. We show that
it satisfies (\ref{eq.d1}), (\ref{eq.d2}) and (\ref{eq.d3}). Let
$p,q\in \mathcal{B}$ such that $p_1=q_2$ for some $p_1$ and $q_2$.
Then $p=e_{z,y},q=e_{x,t}$ and $p_1=q_2=e_{z,t}$ for some $x\leq
z\leq t\leq y$. Clearly $t\sim z$, and let $\mathcal{C}$ be the
equivalence class of $t$ in $U_{x,y}/\sim$. Then
$\beta(p_2,q)=\beta(e_{t,y},e_{x,t})$ and
$\beta(p,q_1)=\beta(e_{z,y},e_{x,z})$, and they are both equal to
$\alpha_{\mathcal{C}}$ if $\mathcal{C}\in (U_{x,y}/\sim)_0$, and to
0 if $\mathcal{C}\notin (U_{x,y}/\sim)_0$. Thus (\ref{eq.d1})
is satisfied. \\
Let now $p=e_{z,y},p_1=e_{z,t},p_2=e_{t,y}$ and $q=e_{x,u}$ such
that $p_1\neq q_2$ for any $q_2$. Then
$\beta(p_2,q)=\beta(e_{t,y},e_{x,u})$. If $u\neq t$, this is clearly
0. Let $u=t$. Then $x\nleqslant z$, otherwise $p_1=q_2$ for some
$q_2$. We have that $t\in U_{x,y}$, but the equivalence class of $t$
in $U_{x,y}/\sim$ is not in $(U_{x,y}/\sim)_0$, since $e_{z,y}\in
\mathcal{B}$, $z\leq t$, but $x\nleqslant z$. It follows that
$\beta(p_2,q)=0$, and (\ref{eq.d2}) holds. Similarly we can show
that (\ref{eq.d3}) holds.
\end{proof}

\section{Left  quasi-co-Frobenius path subcoalgebras and
subcoalgebras of incidence coalgebras}\label{s3}

In this section we investigate when a path subcoalgebra of a path
coalgebra or a subcoalgebra of an incidence coalgebra is left
co-Frobenius. We keep the notation of Section \ref{sectionbilforms}.
Thus $C$ will be either a path subcoalgebra of a path coalgebra
$K\Gamma$, or a subcoalgebra of an incidence coalgebra $KX$. The
distinguished basis of $C$ consisting of paths or segments will be
denoted by $\mathcal{B}$. We note that in each of the two cases
$\mathcal{B}\cap C_n$ is a basis of $C_n$, where $C_0\subseteq
C_1\subseteq \ldots $ is the coradical filtration of $C$. The
injective envelopes of the simple left (right) comodules were
described in \cite[Lemma 5.1]{simson} for incidence coalgebras and
in \cite[Corollary 6.3]{chin} for path coalgebras. It is easy to see
that these descriptions extend to the following.

\begin{proposition}\label{inj}
(i) If $C$ is a path subcoalgebra, then for each vertex $v$ of
$\Gamma$ such that $v\in C$, the injective envelope of the left
(right) $C$-comodule $Kv$ is (the $K$-span)
$E_l(Kv)=<p\in \mathcal{B}|t(p)=v>$ (and $E_r(Kv)=<p\in \mathcal{B}|s(p)=v>$ respectively).\\
(ii) If $C$ is a subcoalgebra of the incidence coalgebra  $KX$, then
for any $a\in X$ such that $e_{a,a}\in C$, the injective envelope of
the left (right) $C$-comodule $Ke_{a,a}$ is (the $K$-span)
$E_l(Ke_{a,a})=<e_{x,a}|x\in X, e_{x,a}\in C>$ (and
$E_r(Ke_{a,a})=<e_{a,x}|x\in X, e_{a,x}\in C>$).
\end{proposition}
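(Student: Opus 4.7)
The plan is to treat (i) and (ii) by the same strategy, which is a direct extension of the arguments of \cite{simson} and \cite{chin} for the full coalgebras $KX$ and $K\Gamma$. I describe the reasoning for (i); part (ii) follows by the obvious replacement of ``paths ending at $v$'' by ``segments of the form $e_{x,a}$''.

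First I would set $E_v := \langle p\in\mathcal{B}\mid t(p)=v\rangle$. Since $\mathcal{B}$ is partitioned by the target vertex, there is a vector space decomposition $C=\bigoplus_{v\in \Gamma_0\cap C} E_v$. The key step is to verify that each $E_v$ is a left $C$-subcomodule of $C$: for $p\in\mathcal{B}$ with $t(p)=v$, the formula $\Delta(p)=\sum_{p=qr}q\otimes r$ shows that every right tensor factor $r$ is a subpath of $p$, hence belongs to $\mathcal{B}$ by the closure property of $\mathcal{B}$ recalled at the beginning of Section~\ref{sectionbilforms}, and moreover satisfies $t(r)=t(p)=v$. Thus $\Delta(E_v)\subseteq C\otimes E_v$, and $C=\bigoplus_v E_v$ is a decomposition as left $C$-comodules.

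Now, since $C$ is injective as a left $C$-comodule, each summand $E_v$ is an injective left $C$-comodule. To identify $E_v$ with the injective envelope of $Kv$ it is enough to compute its socle: because $C$ is pointed, the simple left $C$-comodules are precisely the $Kw$ for vertices $w\in C$, and if $Kw\subseteq E_v$ then $w\in E_v$ forces $w$ to be a path of length $0$ with $t(w)=v$, i.e.\ $w=v$. Hence $\mathrm{socle}(E_v)=Kv$ is simple, which forces $E_v$ to be indecomposable, and therefore $E_v=E_l(Kv)$. The description of $E_r(Kv)$ follows by the symmetric argument using $s$ in place of $t$. For (ii), the analogous subcomodule is $\langle e_{x,a}\mid e_{x,a}\in C\rangle$: closure of $\mathcal{B}$ under subsegments, guaranteed by \cite[Proposition~1.1]{DNV}, ensures that in $\Delta(e_{x,a})=\sum_{x\leq z\leq a}e_{x,z}\otimes e_{z,a}$ every right tensor factor $e_{z,a}$ lies in $\mathcal{B}$, after which the socle and indecomposability arguments transfer verbatim.

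The only nontrivial point is the subcomodule check, which rests entirely on the closure of $\mathcal{B}$ under subpaths (resp.\ subsegments); once this is in place, the identification of the injective envelope is formal, coming from the injectivity of $C$ over itself together with the pointedness of $C$. There is therefore no real obstacle, and the argument can be presented in a few lines.
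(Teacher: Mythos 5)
Your argument is correct and is essentially the verification the paper leaves implicit: the paper offers no written proof beyond citing \cite[Lemma 5.1]{simson} and \cite[Corollary 6.3]{chin} for the full incidence and path coalgebras and asserting that the descriptions extend, and your decomposition $C=\bigoplus_v E_v$ into coideals spanned by the basis paths (segments) with fixed terminal vertex, each a direct summand of the injective comodule $C$, is exactly that extension. The only step I would phrase more carefully is the socle computation: rather than ``if $Kw\subseteq E_v$ then $w\in E_v$'', note that in a pointed coalgebra every simple left subcomodule of $C$ is spanned by a grouplike, so $\mathrm{socle}(E_v)=E_v\cap C_0=Kv$ since $v$ is the only vertex among the basis elements spanning $E_v$; then injectivity of $E_v$ together with its simple (hence essential) socle gives $E_v=E_l(Kv)$, and the right-hand and incidence cases follow as you say.
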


The following shows that we have a good left-right duality for
comodules generated by elements of the basis $\mathcal{B}$.

\begin{lemma}\label{l.duals}
(i) Let $C$ be a subcoalgebra of the incidence coalgebra $KX$, and
let $e_{a,b}\in C$. Then
$(C^*e_{a,b})^*\cong e_{a,b} C^*$ as right $C^*$-modules (or left $C$-comodules).\\
(ii) Let $C$ be a path subcoalgebra of $K\Gamma$, and let $p$ be a
path in $C$. Then $(C^* p)^*\cong p C^*$ as right $C^*$-modules (or
left $C$-comodules).
\end{lemma}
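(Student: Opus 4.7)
The plan is to construct, in each of the two parts, an explicit right $C^*$-module isomorphism by defining a natural pairing between distinguished bases of the two sides, and then to conclude via a dimension count. In both parts, the source and target will turn out to be finite-dimensional of the same dimension, so once an injective $C^*$-linear map is produced it is automatically an isomorphism.

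I would first identify the modules in basis terms, then define the pairing. For part (ii), using $\Delta(p) = \sum_{qr=p} q\otimes r$ together with closure of $\mathcal{B}$ under subpaths, $C^*p$ and $pC^*$ are spanned respectively by the prefixes and the suffixes of $p$, both of dimension $\mathrm{length}(p)+1$. For each suffix $r$ of $p$, let $q_r$ denote the unique prefix with $q_r r = p$, and define $\phi: pC^*\to (C^*p)^*$ by taking $\phi(r)$ to be the functional sending $q_r$ to $1$ and every other prefix of $p$ to $0$. For part (i), analogously, $\Delta(e_{a,b}) = \sum_{a\leq z\leq b} e_{a,z}\otimes e_{z,b}$ and the hereditary property of $\mathcal{B}$ for incidence subcoalgebras give $C^*e_{a,b} = \mathrm{span}\{e_{a,z}\,|\,a\leq z\leq b\}$ and $e_{a,b}C^* = \mathrm{span}\{e_{z,b}\,|\,a\leq z\leq b\}$, both finite-dimensional by local finiteness of $X$; here I define $\phi(e_{z,b})(e_{a,w}) = \delta_{z,w}$. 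In each part $\phi$ is plainly injective, as it sends distinct basis elements to distinct functionals.

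The only substantive step is verifying right $C^*$-linearity, i.e.\ the identity $\phi(n\cdot c^*)(m) = \phi(n)(c^*\cdot m)$ for $n$ in the source and $m$ in $C^*e_{a,b}$ (resp.\ $C^*p$). Upon expanding the two $C^*$-actions through $\Delta$, both sides collapse to the same single surviving term: in part (i), $c^*(e_{z,w})$, nonzero precisely when $z\leq w$; in part (ii), $c^*(s)$, where $s$ is the unique path for which $p$ admits a decomposition $p = q\,s\,t$ with $r = st$. The main (and only) obstacle is the bookkeeping of these Sweedler-type sums and pinning down the lone surviving term on each side, but this is purely mechanical and relies on nothing beyond the hereditary closure properties of $\mathcal{B}$ recalled at the start of Section~\ref{sectionbilforms}. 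Once $C^*$-linearity is in hand, bijectivity follows from the equality of dimensions, completing the proof in both cases.
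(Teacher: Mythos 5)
Your approach is essentially the paper's own: in both parts you identify $C^*e_{a,b}$ (resp.\ $C^*p$) with the span of the segments $e_{a,x}$, $a\leq x\leq b$ (resp.\ the prefixes of $p$), identify $e_{a,b}C^*$ (resp.\ $pC^*$) with the span of the $e_{x,b}$ (resp.\ the suffixes), and match the dual basis of the first module with the basis of the second, checking right $C^*$-linearity by a direct computation with $\Delta$; the paper writes the same bijection in the opposite direction, $e_{a,x}^*\mapsto e_{x,b}$ and $p_i^*\mapsto \overline{p_{i,n}}$, and records the two module actions explicitly rather than evaluating a pairing.

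One correction to the bookkeeping you deferred, which by your own account is the only substantive step: in part (ii) the common surviving term is not the one you state. Writing $m$ for the prefix at which you evaluate and $r$ for the suffix being acted on, both sides of $\phi(r\cdot c^*)(m)=\phi(r)(c^*\cdot m)$ equal $c^*(s)$, where $s$ is the unique path with $q_r\,s=m$ (equivalently, $p=q_r\,s\,t$ with $m=q_r s$ and $r=s\,t$), and this case occurs exactly when $m$ extends $q_r$; otherwise both sides are zero. Your condition ``$p=q\,s\,t$ with $r=st$'' never involves $m$, so it does not single out $s$ at all, and if its $q$ is meant to be the prefix $m$ it forces $m=q_r$, which is generally false. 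With $s$ identified correctly the verification goes through exactly as in part (i), whose surviving term $c^*(e_{z,w})$ for $z\leq w$ you do state correctly, and the rest of your argument (closure of $\mathcal{B}$ under subpaths/subsegments, injectivity on the distinguished basis, equal finite dimensions) is fine.
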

\begin{proof}
(i) Clearly the set of all segments $e_{a,x}$ with $a\leq x\leq b$
is a basis of $C^*e_{a,b}$. Denote by $e_{a,x}^*$ the corresponding
elements of the dual basis of $(C^*e_{a,b})^*$. Since for $c^*\in
C^*$ and $a\leq x,y\leq b$ we have
\begin{eqnarray*}
(e_{a,x}^*c^*)(e_{a,y})&=&\sum_{a\leq z\leq y}c^*(e_{z,y})e_{a,x}^*(e_{a,z})\\
&=&\left\{
\begin{array}{l}
0,\mbox{ if } x\nleqslant y\\ c^*(e_{x,y}), \mbox{ if } x\leq y
\end{array}\right.
\end{eqnarray*}
we get that
\begin{equation} \label{eq1dual}
e_{a,x}^*c^*=\sum_{x\leq y\leq b}c^*(e_{x,y})e_{a,y}^*
\end{equation} On the other hand $e_{a,b}C^*$ has a basis consisting
of all segments $e_{x,b}$ with $a\leq x\leq b$, and
\begin{equation} \label{eq2dual}
e_{x,b}c^*=\sum_{x\leq y\leq b}c^*(e_{x,y})e_{y,b} \end{equation}
Equations (\ref{eq1dual}) and (\ref{eq2dual}) show that the linear
map $\phi:(C^*e_{a,b})^*\ra e_{a,b}C^*$ defined by $\phi
(e_{a,x}^*)=e_{x,b}$, is
an isomorphism of right $C^*$-modules.\\
(ii) Let $p=a_1\ldots a_n$ and $v=s(p)$. Denote $p_i=a_1\ldots a_i$
for any $1\leq i\leq n$, and $p_0=v$. Then $\{ p_0,p_1,\ldots
,p_n\}$ is a basis of $C^*p$, and let $(p_i^*)_{0\leq i\leq n}$ be
the dual basis of $(C^*p)^*$. For any $0\leq t\leq j\leq n$ denote
by $\overline{p_{t,j}}$ the path such that
$p_j=p_t\overline{p_{t,j}}$. Then a simple computation shows that
$p_i^*c^*=\sum_{i\leq j\leq n}c^*(\overline{p_{i,j}})p_j^*$ for any
$i$ and any $c^*\in C^*$.\\
On the other hand, $\{ \overline{p_{i,n}}\;|\; 0\leq i\leq n\}$ is a
basis of $pC^*$, and it is easy to see that
$\overline{p_{i,n}}c^*=\sum_{i\leq r\leq
n}c^*(\overline{p_{i,r}})\overline{p_{r,n}}$ for any $i$ and any
$c^*\in C^*$. Then the linear map $\phi:(C^*p)^*\ra pC^*$ defined by
$\phi (p_i^*)=\overline{p_{i,n}}$ for any $0\leq i\leq n$, is an
isomorphism of right $C^*$-modules.
\end{proof}

For a path subcoalgebra $C$ let us denote by $R(C)$ the set of
vertices $v$ in $C$ such that the set $\{p\in C\; |\; p \mbox{ path
and }s(p)=v\}$ is finite (i.e. $E_r(Kv)$ is finite dimensional) and
contains a unique maximal path. Note that $v\in R(C)$ if and only if
 $E_r(Kv)$ is finite dimensional and local. Indeed, if $E_r(Kv)$
is finite dimensional and contains a unique maximal path
$p=a_1\ldots a_n$, then keeping the notation from the proof of Lemma
\ref{l.duals}, we have that $E_r(Kv)=C^*p$ and
$C^*p_{n-1}=<p_0,\ldots ,p_{n-1}>$ is the unique maximal
$C^*$-submodule of $C^*p$. Conversely, if $E_r(Kv)$ is finite
dimensional and local with the unique maximal subcomodule $N$, then
 the set $(\mathcal{B}\cap E_r(Kv))/N$ is nonempty. If $p$ is a path
which belongs to this set, $E_r(Kv)=C^*p$. Then clearly $p$ is the
unique maximal path in $\{q\in C\; |\; q \mbox{ path and
}s(q)=v\}$.\\
 Similarly, denote by $L(C)$ the set of vertices
$v$ of $C$ such that $E_l(Kv)$ is a finite dimensional local left
$C$-comodule. Also, for each vertex $v\in R(C)$ let $r(v)$ denote
the endpoint of the
maximal path starting at $v$, and for $v\in L(C)$ let $l(v)$ be the starting point of the maximal path ending at $v$.\\
Similarly, for a subcoalgebra $C$ of the incidence coalgebra $KX$,
let $R(C)$ be the set of all $a\in X$ for which $e_{a,a}\in C$ and
the set $\{x\in X\;|\; a\leq x,\,e_{a,x}\in C\}$ is finite and has a
unique maximal element, and $L(C)$ be the set of all $a\in X$ for
which $e_{a,a}\in C$ and the set $\{x\in X|x\leq a,\,e_{x,a}\in C\}$
is finite and has a unique minimal element. As before, $R(C)$
(respectively $L(C)$) consists of those $a\in X$ for which
$E_r(Ke_{a,a})$ (respectively, $E_l(Ke_{a,a})$) are local, hence
generated by a segment. Here $r(a)=r(e_{a,a})$ for $a\in R(C)$
denotes the maximum element in the set $\{x\;|\; x\geq
a,\,e_{a,x}\in C\}$ and $l(a)$ for $a\in L(C)$ means the minimum of
$\{x\;|\; x\leq a, e_{x,a}\in C\}$.

\begin{proposition}\label{p.qcf}
(I) Let $C$ be a path subcoalgebra of the path coalgebra $K\Gamma$.
Then the following are
equivalent.\\
(a) $C$ is left co-Frobenius.\\
(b) $C$ is left quasi-co-Frobenius.\\
(c) $R(C)$ consists of all vertices belonging to $C$,
$r(R(C))\subseteq L(C)$ and $lr(v)=v$, for any vertex $v$ in $C$. \\
(d) For any path $q\in \mathcal{B}$ there exists a path $p\in
\mathcal{B}$  such that $qp\in \mathcal{F}$ (for $\mathcal{F}$ defined in the previous section).\\
(II) Let $C$ be a subcoalgebra of the incidence coalgebra $KX$.
Then the following are equivalent.\\
(a) $C$ is left co-Frobenius.\\
(b) $C$ is left quasi-co-Frobenius.\\
(c) $R(C)$ consists of all $a\in X$ such that $e_{a,a}\in C$,
$r(R(C))\subseteq L(C)$ and $lr(a)=a$, $\forall\,a\in X$ with
$e_{a,a}\in C$.\\
(d) For any segment $e_{x,z}\in C$ there exists $y\geq z$ such that
$e_{z,y}\in C$ and the class of $z$ in $U_{x,y}/\sim$ lies in
$(U_{x,y}/\sim)_0$.
\end{proposition}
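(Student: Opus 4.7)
The plan is to establish a cycle of implications $(a) \Rightarrow (b) \Rightarrow (c) \Rightarrow (d) \Rightarrow (a)$, treating parts (I) and (II) in parallel, since Section \ref{sectionbilforms}, Proposition \ref{inj} and Lemma \ref{l.duals} give a fully symmetric framework for the two classes. The implication $(a) \Rightarrow (b)$ is immediate, as a single non-degenerate $C^*$-balanced form already constitutes a separating family.

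For $(b) \Rightarrow (c)$, I would invoke the standard structural consequence of left QcF: every injective left $C$-comodule decomposes as a direct sum of finite-dimensional indecomposable injectives, and the assignment $E_l(S) \mapsto \mathrm{soc}((E_l(S))^*)$ induces a bijection between iso-classes of simple left and simple right $C$-comodules. Applied to $S = Kv$ (resp.\ $S = Ke_{a,a}$), combined with the explicit form of $E_l(Kv)$ in Proposition \ref{inj}, this forces $E_l(Kv)$ to be finite-dimensional and local, so $v \in L(C)$ (resp.\ $a \in L(C)$). Lemma \ref{l.duals} then identifies $(E_l(Kv))^*$ as a cyclic right $C$-comodule whose socle is the simple right comodule at $l(v)$; running the QcF bijection the other way places $l(v)$ in $R(C)$ and yields $r(l(v)) = v$. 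The analogous argument starting from right comodules gives that $R(C)$ equals the full set of grouplikes of $C$, together with $r(R(C)) \subseteq L(C)$ and $lr(v) = v$.

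For $(c) \Rightarrow (d)$, given any $q \in \mathcal{B}$ with $t(q) = v$ (resp.\ any $e_{x,z} \in C$), condition (c) provides the unique maximal path $p$ starting at $v$ and ending at $r(v)$ (resp.\ the maximal $y = r(z)$ with $e_{z,y} \in C$ and $l(y) = z$). To check $d := qp \in \mathcal{F}$, take any decomposition $d = q' p'$ in $\mathcal{B}$ and any arrow $a$ with $ap' \in \mathcal{B}$: uniqueness of the maximal path out of $s(p')$ together with $lr = \text{id}$ forces $q'$ to end with $a$, and the symmetric argument using $l$ gives the other bullet of $\mathcal{F}$. In the incidence case one additionally needs to show, via a zigzag induction along the chains defining $\sim$, that every $u$ in the class of $z$ in $U_{x,y}/\sim$ inherits these extremality properties, placing the class in $(U_{x,y}/\sim)_0$. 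Finally, for $(d) \Rightarrow (a)$, use Theorem \ref{formebilpath} (resp.\ Theorem \ref{formebilinc}) to define a balanced form $\beta$ by setting $\alpha_d = 1$ for all $d \in \mathcal{F}$ (resp.\ $\alpha_{\mathcal{C}} = 1$ for every admissible class); condition (d) ensures $\beta$ is non-degenerate in the slot corresponding to the convention of left co-Frobeniusness, yielding an injective left $C^*$-module map $C \hookrightarrow C^*$.

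The main obstacle is $(b) \Rightarrow (c)$: converting the abstract module-theoretic QcF condition into concrete finiteness and locality statements about the injective envelopes $E_l(Kv)$, and using Lemma \ref{l.duals} to line up left and right sides so as to deliver both $r(R(C)) \subseteq L(C)$ and $lr = \text{id}$. A secondary subtlety arises in the incidence version of $(c) \Rightarrow (d)$, where extending the extremality from a single element $z$ to its whole equivalence class under $\sim$ cannot be read off directly and requires the zigzag induction mentioned above.
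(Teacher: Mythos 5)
Your cycle of implications is the same as the paper's, and the steps (a)$\Rightarrow$(b), (c)$\Rightarrow$(d) and (d)$\Rightarrow$(a) are essentially workable: your variant choice in (c)$\Rightarrow$(d) --- concatenating $q$ with the maximal path out of $t(q)$, resp.\ taking $y=r(z)$, instead of the paper's maximal path out of $s(q)$, resp.\ $y=r(x)$ --- does produce an element of $\mathcal{F}$, and in the incidence case the two bullet conditions defining $(U_{x,y}/\sim)_0$ then hold for \emph{every} element of $U_{x,y}$, so the zigzag induction you worry about is not needed; your (d)$\Rightarrow$(a) with all $\alpha_d=1$ is at the same level of detail as the paper's. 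The genuine gap is in (b)$\Rightarrow$(c), exactly the step you single out as the main obstacle. The ``standard structural consequence of left QcF'' you invoke is stated on the wrong side and is too strong: left QcF does not give that every injective \emph{left} comodule is a direct sum of finite-dimensional indecomposables, nor a bijection between iso-classes of simple left and simple right comodules. What it gives (and what the paper uses, quoting \cite{II}, \cite{IG} together with \cite[Lemma 1.4]{IF}) is one-sided: for every vertex $v$ there is a vertex $u$ with $E_r(Kv)\cong E_l(Ku)^*$, whence $E_r(Kv)$ is finite dimensional and local. Your route instead concludes, as an intermediate step, that every vertex lies in $L(C)$ and that $r(l(v))=v$; the latter is the \emph{right}-hand condition and is false under hypothesis (b): the coalgebra $K[\AA_{0,\infty},r]$ of Example \ref{gn0} is left co-Frobenius, yet $l(0)=0$ and $r(l(0))=r(0)>0$.

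Consequently your closing sentence, that ``the analogous argument starting from right comodules'' yields $R(C)=G(C)$, $r(R(C))\subseteq L(C)$ and $lr(v)=v$, cannot be carried out: that symmetric argument would require the right-hand QcF hypothesis, which is neither assumed nor implied (same counterexample). The correct argument runs entirely on one side: from $E_r(Kv)\cong E_l(Ku)^*$ deduce $v\in R(C)$ for every vertex $v$, so $E_r(Kv)=C^*p$ for the unique maximal path (segment) $p$ starting at $v$; comparing socles (the surjection $C^*p\to Kw$ with $w=t(p)$ transports to $E_l(Ku)^*\to Kw$ and dualizes to an embedding $Kw\hookrightarrow E_l(Ku)$, forcing $w=u$) identifies $u=r(v)$; and only then does Lemma \ref{l.duals} enter, giving $E_l(Ku)\cong (C^*p)^*\cong pC^*$, so $E_l(Ku)$ is generated by $p$, i.e.\ $p$ is the unique maximal path ending at $u$, which yields $r(v)\in L(C)$ and $lr(v)=v$. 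Until this one-sided argument replaces your two-sided one, the proof of (b)$\Rightarrow$(c), and hence of the proposition, is incomplete.
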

\begin{proof}
$(I)$ (a)$\Rightarrow $(b) is clear. \\
(b)$\Rightarrow$(c) We apply the QcF characterization of \cite{II}
and \cite{IG}. If $C$ is left QcF then for any vertex $v\in C$,
there is a vertex $u\in C$ such that $E_r(Kv)\cong E_l(Ku)^*$. Hence
$E_r(Kv)$ is finite dimensional and local (by \cite[Lemma 1.4]{IF}),
so   $v\in R(C)$ and $E_r(Kv)=C^*p$ for a path $p$ by the discussion
preceding this Proposition. Let $t(p)=w$. Then it is easy to see
that the linear map $\phi:C^*p\ra Kw$ taking $p$ to $w$, and any
other $q$ to 0, is a surjective morphism of left $C^*$-modules.
Since $E_r(Kv)\cong E_l(Ku)^*$, there is a surjective morphism of
left $C^*$-modules $E_l(Ku)^*\ra Kw$, inducing an injective morphism
of right $C^*$-modules $(Kw)^*\ra E_l(Ku)$. Since $(Kw)^*\cong Kw$
as right $C^*$-modules, and the socle of the comodule $E_l(Ku)$ is
$Ku$, we must have $w=u$, and thus $u=r(v)$. By Lemma \ref{l.duals},
$E_l(Ku)\cong E_r(Kv)^*=(C^* p)^*\cong p C^*$, so $E_l(Ku)$ is
generated by $p$, and this shows that $p$ is the unique maximal path
ending at $u$. Hence, $u=r(v)\in L(X)$,
and $l(u)=v$. Thus $l(r(v))=v$. \\
(c)$\Rightarrow$ (d) Let $q\in \mathcal{B}$, and let $v=s(q)$. Since
$v\in R(C)$, there exists a unique maximal path $d$ starting at $v$,
and $d=qp$ for some path $p$. We show that $d\in \mathcal{F}$.
Denote $t(d)=v'$, and let $d=q'p'$ for some paths $q',p'$ in
$\mathcal{B}$. Let $u=t(q')=s(p')$. If there is an arrow $b$ (in
$\Gamma_1$) starting at $u$, such that $q'b\in \mathcal{B}$, then
$q'b$ is a subpath of $d$, since $d$ is the unique maximal path
starting at $v$. It follows that $p'$ starts with $b$. On the other
hand, $v'=r(v)\in L(C)$ and $l(v')=lr(v)=v$, so $d$ is the unique
maximal path in $\mathcal{B}$ ending at $v'$. This shows that if an
arrow $a$ (in $\Gamma_1$) ends at $u$, and $ap'\in \mathcal{B}$,
then $ap'$ is a subpath of $d$, so the last arrow of $q'$ is $a$. We
conclude that
$d\in \mathcal{F}$. \\
(d)$\Rightarrow$(a) Choose a family $(\alpha_d)_{d\in \mathcal{F}}$
of scalars, such that $\alpha_d\neq 0$ for any $d$. Associate a
$C^*$-balanced bilinear form $B$ on $C$ to this family of scalars as
in Theorem \ref{formebilpath}. Then $B$ is right non-degenerate, so
$C$ is left co-Frobenius.

 $(II)$ (a)$\Rightarrow$(b) is clear; (b)$\Rightarrow$(c) is proved as the similar implication in
 $(I)$, with paths
replaced by segments.\\
(c)$\Rightarrow$(d) Let $e_{x,z}\in C$. If $r(x)=y$, then clearly
$z\leq y$ and $e_{x,y}\in \mathcal{B}$, so $U_{x,y}=[x,y]$. Then any
two elements in $U_{x,y}$ are equivalent with respect to $\sim$
(since they are both $\geq x$), so there is precisely one
equivalence class in $U_{x,y}/\sim$, the whole of $U_{x,y}$. We show
that this class lies in $(U_{x,y}/\sim)_0$. Indeed, if $u\in
U_{x,y}$, $v\in X$, $v\leq u$ and $e_{v,y}\in \mathcal{B}$, then
$v\in \{ a|e_{a,y}\in \mathcal{B}\}$, and since $l(y)=l(r(x))=x$, we
must have $x\leq v$. Also, if $u\in U_{x,y}$, $v\in X$, $u\leq v$
and $e_{x,v}\in \mathcal{B}$, then $v\in \{a|e_{x,a}\in
\mathcal{B}\}$. Then $v\leq y$ since $r(x)=y$.\\
(d)$\Rightarrow$(a) follows as the similar implication in
 $(I)$ if we take into account Theorem \ref{formebilinc}.
\end{proof}

As a consequence we obtain the following result, which was proved
for incidence coalgebras in \cite{DNV}.

\begin{corollary} \label{proppathcoFrob}
If $C=K\Gamma$, a path coalgebra, or $C=KX$, an incidence coalgebra, the following are equivalent \\
(i) $C$ is co-semisimple (i.e. $\Gamma$ has no arrows for
$C=K\Gamma$,
and the order relation on $X$ is the equality for $C=KX$).\\
(ii) $C$ is left QcF.\\
(iii) $C$ is left co-Frobenius.\\
(iv) $C$ is right QcF.\\
(v) $C$ is right co-Frobenius.
\end{corollary}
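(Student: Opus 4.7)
My plan is to deduce the corollary directly from Proposition \ref{p.qcf}. The implications (iii)$\Rightarrow$(ii) and (v)$\Rightarrow$(iv) are immediate, since an embedding $C \hookrightarrow C^*$ is a special case of an embedding into a free $C^*$-module. Hence only the equivalences linking (i) with (ii) and (iv) genuinely require work.

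For (i)$\Rightarrow$(iii) and (i)$\Rightarrow$(v), I would argue that when $\Gamma$ has no arrows the distinguished basis $\mathcal{B}$ consists solely of vertices, so $E_r(Kv) = E_l(Kv) = Kv$ for every vertex $v \in C$. Thus every vertex lies in $R(C) \cap L(C)$ with $r(v) = l(v) = v$, and condition (c) of Proposition \ref{p.qcf}(I) is satisfied vacuously; consequently $C$ is left co-Frobenius, and by the symmetric argument also right co-Frobenius. The parallel argument for $C = KX$ with the trivial order is identical via Proposition \ref{p.qcf}(II).

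The substantive implication is (ii)$\Rightarrow$(i); the companion (iv)$\Rightarrow$(i) will follow by the evident right-sided version of Proposition \ref{p.qcf}, obtained by interchanging the roles of $R$ and $L$ throughout the proof (or, equivalently, by passing to the opposite quiver or reverse order). Assuming $C = K\Gamma$ is left QcF, Proposition \ref{p.qcf}(I) gives that $R(C)$ is the set of all vertices of $C$, that $r(R(C)) \subseteq L(C)$, and that $lr(v) = v$ for every vertex. I intend to derive a contradiction from the existence of any arrow $a : u \to w$. Since $u \in R(C)$, take the unique maximal path $p = a_1 \ldots a_n$ beginning at $u$; the arrow $a$ itself extends $u$, so $n \geq 1$. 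By maximality of $p$, no arrow emanates from its endpoint $v' = r(u)$, so the only path in $\mathcal{B}$ starting at $v'$ is $v'$ itself, whence $r(v') = v'$. On the other hand, $v' \in L(C)$ by $r(R(C)) \subseteq L(C)$, and the arrow $a_n$ ends at $v'$, so the unique maximal path ending at $v'$ has length $\geq 1$, forcing $l(v') \neq v'$. This yields $lr(v') = l(v') \neq v'$, contradicting condition (c) applied to $v'$.

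For $C = KX$, the analogous argument starts from a strict relation $x < y$ in $X$: since $y \in R(C)$, the set $\{z : y \leq z\}$ admits a maximum $z^* = r(y)$, and $z^* \in L(C)$ with $l(z^*) = lr(y) = y$. But $x \leq y \leq z^*$ places $x$ in $\{w : w \leq z^*\}$, contradicting the minimality of $y = l(z^*)$ in this set. The only real obstacle in the whole proof is selecting the correct diagnostic witness (here $v'$ or $x$) at which condition (c) visibly breaks; once that choice is made, the contradiction is immediate, and the rest is bookkeeping between left and right.
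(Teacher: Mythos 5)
Your argument is correct and follows the paper's intended route: the corollary is presented there as an immediate consequence of Proposition \ref{p.qcf}, and you simply verify that condition (c) holds when there are no arrows (trivial order) and fails at the witness $v'=r(u)$ (respectively at $x<y$) otherwise. One small point to make explicit: the step ``the maximal path ending at $v'$ has length $\geq 1$, hence $l(v')\neq v'$'' also uses the fact, established in your previous sentence, that no arrow leaves $v'$ --- otherwise that maximal path could a priori be a cycle starting and ending at $v'$.
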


As an immediate consequence we describe the situations where a
finite dimensional path algebra is Frobenius. We note that the path
algebra of a quiver $\Gamma$ (as well as the path coalgebra
$K\Gamma$) has finite dimension if and only if $\Gamma$ has finitely
many vertices and arrows, and there are no cycles.

\begin{corollary}
A finite dimensional path algebra is Frobenius if and only if the
quiver has no arrows.
\end{corollary}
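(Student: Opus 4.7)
The plan is to reduce this to the previous corollary via finite-dimensional duality. Observe first that the path algebra $K[\Gamma]$ and the path coalgebra $K\Gamma$ share the same underlying vector space, with a basis indexed by the paths of $\Gamma$; in particular, one is finite dimensional if and only if the other is. Moreover, when this common dimension is finite, the multiplication of $K[\Gamma]$ and the comultiplication of $K\Gamma$ are dual operations with respect to the path basis, so there is a natural coalgebra isomorphism $K[\Gamma]^* \cong K\Gamma$. This is the only structural identification needed.

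Next I would invoke the standard fact that a finite dimensional $K$-algebra $A$ is Frobenius if and only if the dual coalgebra $A^*$ is co-Frobenius (equivalently, $A^* \hookrightarrow (A^*)^* \cong A$ as left $A$-modules). Applying this to $A = K[\Gamma]$ and combining with the isomorphism above, $K[\Gamma]$ is Frobenius if and only if $K\Gamma$ is (left) co-Frobenius.

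Finally I would quote Corollary \ref{proppathcoFrob}, which states that the path coalgebra $K\Gamma$ is co-Frobenius precisely when $\Gamma$ has no arrows. Chaining the two equivalences gives the claim. There is no genuine obstacle here; the only point worth flagging is the implicit assumption that finite dimensionality of $K[\Gamma]$ forces $\Gamma$ to be finite and acyclic, which guarantees that the duality between $K[\Gamma]$ and $K\Gamma$ as outlined above is well-defined (otherwise $K\Gamma$ would sit only as a subcoalgebra of the full linear dual of $K[\Gamma]$, and the argument would need adjustment).
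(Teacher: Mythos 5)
Your proposal is correct and is essentially the paper's own argument: the paper likewise notes that the dual of a finite dimensional path algebra is a path coalgebra and then invokes Corollary \ref{proppathcoFrob}. You simply spell out the Frobenius/co-Frobenius duality step that the paper leaves implicit.
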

\begin{proof}
It follows from the fact that the dual of a finite dimensional path
algebra is a path coalgebra, and by Corollary \ref{proppathcoFrob}.
\end{proof}

\section{Classification of left co-Frobenius path
subcoalgebras}\label{s4}

Proposition \ref{p.qcf} gives information about the structure of
left co-Frobenius path subcoalgebras. The aim of this section is to
classify these coalgebras. We first use Proposition \ref{p.qcf} to
give some examples of left co-Frobenius path subcoalgebras. These
examples will be the building blocks for the classification.

\begin{example}\label{gn}
Let $\Gamma=\AA_{\infty}$ be the quiver such that $\Gamma_0=\ZZ$ and
there is precisely one arrow from $i$ to $i+1$ for any $i\in \ZZ$.

 $$\AA_{\infty}: \;\;\;\; \dots\longrightarrow \circ^{-1}\longrightarrow
\circ^{0}\longrightarrow \circ^{1}\longrightarrow
\circ^2\longrightarrow\dots$$

For any $k<l$, let $p_{k,l}$ be the (unique) path from the vertex
$k$ to the vertex $l$. Also denote by $p_{k,k}$ the vertex $k$. Let
$r:\ZZ \ra \ZZ$ be a strictly increasing function such that $r(n)>n$
for any $n\in \ZZ$. We consider the path subcoalgebra
$K[\AA_{\infty}, r]$ of $K\AA_{\infty}$ with the basis
\begin{eqnarray*}
\mathcal{B}&=&\bigcup_{n\in \ZZ}\{ p\; |\; p \mbox{ is a path in
}\AA_{\infty}, s(p)=n \;{\rm and}\;{\rm length}(p)\leq r(n)-n\}\\
&=&\{ p_{k,l}\;|\; k,l\in \ZZ \;{\rm and}\; k\leq l\leq r(k)\}
\end{eqnarray*}
Note that $K[\AA_{\infty}, r]$ is indeed a subcoalgebra, since
$$\Delta(p_{k,l})=\sum\limits_{i=k}^lp_{k,i}\otimes p_{i,l}, \,\,\,k\leq l$$
The counit is given by
$$\varepsilon(p_{k,l})=\delta_{k,l}$$
Note that this can also be seen as a subcoalgebra of the incidence
coalgebra of $(\NN,\leq)$,
consisting of the segments $e_{k,l}$ for $k\leq l\leq r(k)$.\\
The construction immediately shows that the maximal path starting
from $n$ is $p_{n,r(n)}$. Note that for each $n\in \ZZ$,
$p_{n,r(n)}$ is the unique maximal path into $r(n)$. If there would
be another longer path $p_{l,r(n)}$ into $r(n)$ in
$K[\AA_{\infty},r]$, then $l<n$. Then, since $p_{l,r(n)}$ is among
the paths in $K[\AA_{\infty},r]$ which start at $l$ we must have
that it is a subpath of $p_{l,r(l)}$, and so $r(l)\geq r(n)$. But
since $l<n$, this contradicts the assumption that $r$ is strictly
increasing. Therefore, we see that the conditions of Proposition
\ref{p.qcf} are satisfied: $p_{n,r(n)}$ is the unique maximal path
in the (finite) set of all paths starting from a vertex $n$, and it
is simultaneously the unique maximal path in the (finite) set of all
paths ending at $r(n)$. Therefore if $l:L(C)={\rm Im}(r)\rightarrow
R(C)$ is the function used in Proposition \ref{p.qcf} for
$C=K[\AA_{\infty},r]$ satisfies $l(r(n))=n$. This means that
$K[\AA_{\infty},r]$ is a left co-Frobenius
coalgebra.\\
$K[\AA_{\infty},r]$ is also right co-Frobenius if and only if there
is a positive integer $s$ such that $r(n)=n+s$ for any $n\in \ZZ$.
Indeed, if $r$ is of such a form, then $K[\AA_{\infty},r]$ is right
co-Frobenius by the right-hand version of Proposition \ref{p.qcf}.
Conversely, assume that $K[\AA_{\infty},r]$ is right co-Frobenius.
If $r$ would not be surjective, let $m\in \ZZ$ which is not in the
image of $r$. Then there is $n\in \ZZ$ such that $r(n)<m<r(n+1)$.
The maximal path ending at $m$ is $p_{n+1,m}$. Indeed, this maximal
path cannot start before $n$ (since then $p_{n,r(n)}$ would be a
subpath of $p_{n,m}$ different from $p_{n,m}$), and $p_{n+1,m}$ is a
path in $K[\AA_{\infty},r]$, as a subpath of $p_{n+1,r(n+1)}$. Hence
$r(l(m))=r(n+1)\neq m$, and then $K[\AA_{\infty},r]$ could not be
right co-Frobenius by the right-hand version of Proposition
\ref{p.qcf}, a contradiction. Thus $r$ must be surjective, and then
it must be of the form $r(n)=n+s$ for any $n$, where $s$ is an
integer. Since $n<r(n)$ for any $n$, we must have $s>0$. For
simplicity we will denote $K[\AA_{\infty},r]$ by $K[\AA_{\infty}|s]$
in the case where $r(n)=n+s$ for any $n\in \ZZ$.
\end{example}

\begin{example}\label{gn0}
Let $\Gamma=\AA_{0,\infty}$ be the subquiver of $\AA_{\infty}$
obtained by deleting all the negative vertices and the arrows
involving them. Thus $\Gamma_0=\NN$, the natural numbers (including
0).
$$\AA_{0,\infty}:\;\;\; \circ^{0}\longrightarrow
\circ^{1}\longrightarrow
\circ^2\longrightarrow\circ^3\longrightarrow \dots$$ We keep the
same notation for $p_{k,l}$ for $0\leq k\leq l$. Let $r:\NN\ra\NN$
be a strictly increasing function with $r(0)>0$ (so then $r(n)>n$
for any $n\in \NN$), and define $K[\AA_{0,\infty},r]$ to be the path
subcoalgebra of $K\AA_{0,\infty}$ with basis $\{ p_{k,l}\;|\; k,l\in
\NN, k\leq l\leq r(k)\}$. With the same arguments as in Example
\ref{gn} we see that $K[\AA_{0,\infty},r]$ is a left co-Frobenius
coalgebra. We note that $l(0)=0$, and then $r(l(0))=r(0)>0$. By a
right-hand version of Proposition \ref{p.qcf}, this shows that
$K[\AA_{0,\infty},r]$ is never right co-Frobenius.
\end{example}

\begin{example}\label{lps}
For any $n\geq 2$ we consider the quiver $\CC_n$, whose vertices are
the elements of $\ZZ_n=\{ \overline{0},\ldots,\overline{n-1}\}$, the
integers modulo $n$, and there is one arrow from $\overline{i}$ to
$\overline{i+1}$ for each $i$.

$\xymatrix{& &\circ^{\overline{1}}\ar[r] & \circ^{\overline{2}}\ar[r] & \circ^{\overline{3}}\ar[dr] &  \\
 \CC_n:  & \circ^{\overline{0}}\ar[ur] & & & & \circ\ar[dl]\\
& & \dots\ar[ul]& \dots & \circ\ar[l] &}$\\
 We also denote by $\CC_1$ the quiver with one vertex, denoted by $\overline{0}$, and
one arrow $\xymatrix{\circ\ar@(ul,ur)[]}$, and by $\CC_0$ the quiver with one vertex and no arrows.\\
Let $n\geq 1$ and $s>0$ be integers. Let $K[\CC_n,s]$ be the path
subcoalgebra of the path coalgebra  $K\CC_n$, spanned by all paths
of length at most $s$. Denote by $q_{\overline{k}|l}$ the path (in
$\CC_n$) of length $l$ starting at $\overline{k}$, for any
$\overline{k}\in \ZZ_n$ and $0< l\leq s$. Also denote by
$q_{\overline{k}|0}$ the vertex $\overline{k}$. Since the
comultiplication and counit of $K\CC_n$ are given by
$$\Delta(q_{\overline{k}|l})=\sum\limits_{i=0}^lq_{\overline{k}|i}\otimes q_{\overline{k+i}|l-i},$$
$$\varepsilon(q_{\overline{k}|l})=\delta_{0,l}$$
we see that indeed $K[\CC_n,s]=<q_{\overline{k}|l}\;|\;
\overline{k}\in \ZZ, 0\leq l\leq s>$ is a subcoalgebra of $K\CC_n$.
Clearly $q_{\overline{k}|s}$ is the unique maximal path in
$K[\CC_n,s]$ starting at $\overline{k}$, so $\overline{k}\in
R(K[\CC_n,s])$ and $r(\overline{k})=\overline{k+s}$. Also
$\overline{k+s}\in L(K[\CC_n,s])$ and the maximal path ending at
$\overline{k+s}$ is also $q_{\overline{k}|s}$, thus
$lr(\overline{k})=\overline{k}$, and by Proposition \ref{p.qcf} we
get that $K[\CC_n,s]$ is a left co-Frobenius coalgebra. Since it has
finite dimension $n(s+1)$, it is right co-Frobenius, too. This
example was also considered in \cite[1.6]{chyz}.
\end{example}

For a path subcoalgebra $C\subseteq K\Gamma$, denote by $C\cap
\Gamma$ the subgraph of $\Gamma$ consisting of arrows and vertices
of $\Gamma$ belonging to $C$.

\begin{lemma}\label{l.graph}
If $C\subseteq K\Gamma$ is a left co-Frobenius path subcoalgebra,
then $C\cap\Gamma=\bigsqcup\limits_i\Gamma_i$, a disjoint union of
subquivers of $\Gamma$,  where each $\Gamma_i$ is of one of types
$\AA_\infty$, $\AA_{0,\infty}$ or $\CC_n$, $n\geq 0$, and
$C=\bigoplus\limits_{i}C_i$, where $C_i$, a path subcoalgebra of
$K\Gamma_i$, is the subcoalgebra of $C$ spanned by the paths of
$\mathcal{B}$ contained in $\Gamma_i$.
\end{lemma}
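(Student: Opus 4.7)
The plan is to extract from Proposition \ref{p.qcf} the structural information that every vertex appearing in $C$ has in-degree and out-degree at most one in $C\cap\Gamma$, and then to catalogue the connected components of such a quiver. Since $C$ is left co-Frobenius, every vertex $v\in C$ lies in $R(C)$, so by the discussion preceding Proposition \ref{p.qcf} the injective envelope $E_r(Kv)=C^*p_v$ is cyclic on a unique maximal path $p_v\in\mathcal{B}$ with source $v$; in particular every path in $\mathcal{B}$ starting at $v$ is a subpath of $p_v$, forcing at most one arrow of $C$ to leave $v$, namely the first arrow of $p_v$. For the incoming direction, I will argue that if $a\colon u\to v$ is an arrow in $C$, then $u\in R(C)$, its unique outgoing arrow is $a$, and hence its maximal outgoing path factors as $a$ followed by $p_v$, so that $r(u)=r(v)$; if a second arrow $u'\to v$ existed in $C$, the same computation would give $r(u')=r(v)=r(u)$, and then $u=l(r(u))=l(r(u'))=u'$, using the identity $l\circ r=\mathrm{id}$ on vertices of $C$. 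Note that $r(v)=r(u)$ automatically lies in $L(C)$ via $r(R(C))\subseteq L(C)$, so $l$ is defined at this point.

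Once in-degree and out-degree in $C\cap\Gamma$ are both bounded by one, the connected components (as undirected graphs) are exactly isolated vertices, directed cycles, finite directed lines, and one- or two-sided infinite directed lines. I will rule out any finite directed line $v_0\to v_1\to\cdots\to v_k$ with $k\geq 1$ and also any one-sided infinite line whose terminal vertex $w$ is a sink, by the same short argument: such a sink $w$ has no outgoing arrow in $C$, so its maximal outgoing path is trivial and $r(w)=w$; then $l(w)=l(r(w))=w$, whence the maximal path ending at $w$ is also trivial and $w$ has no incoming arrow in $C$, contradicting the existence of an arrow into $w$. The surviving components are precisely the quivers $\AA_\infty$, $\AA_{0,\infty}$, and $\CC_n$ for $n\geq 0$, which yields the decomposition $C\cap\Gamma=\bigsqcup_i\Gamma_i$.

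For the coalgebra statement, any basis path $p\in\mathcal{B}$ of positive length has all of its arrows (and hence all of its vertices) in one and the same connected component of $C\cap\Gamma$, since consecutive arrows share a vertex; vertices of $\mathcal{B}$ trivially belong to a single component. Hence $\mathcal{B}=\bigsqcup_i\mathcal{B}_i$ where $\mathcal{B}_i=\mathcal{B}\cap K\Gamma_i$. Setting $C_i=\mathrm{span}(\mathcal{B}_i)$, the closure of $\mathcal{B}$ under taking subpaths recalled at the start of Section \ref{sectionbilforms}, together with $\Delta(p)=\sum_{qr=p}q\otimes r$, show that each $C_i$ is a subcoalgebra of $C$, visibly a path subcoalgebra of $K\Gamma_i$; the direct sum decomposition $C=\bigoplus_iC_i$ is then immediate from disjointness of the bases $\mathcal{B}_i$. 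The main subtlety lies in the in-degree bound, where one must verify that injectivity of $r$ is actually available at $r(v)$; the outgoing-arrow bound together with the inclusion $r(R(C))\subseteq L(C)$ from Proposition \ref{p.qcf} is exactly what makes this work.
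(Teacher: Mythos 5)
There is a genuine gap in your in-degree argument. For an arrow $a\colon u\to v$ in $C$ you claim that the maximal path $p_u$ starting at $u$ ``factors as $a$ followed by $p_v$'', hence $r(u)=r(v)$. This is false: all you know is that $p_u=az$ where $z\in\mathcal{B}$ is a path starting at $v$, hence an \emph{initial subpath} of $p_v$ — there is no reason for $z$ to be all of $p_v$, because $\mathcal{B}$ is merely closed under subpaths, not under extending a path of $\mathcal{B}$ by another path of $\mathcal{B}$. Already $K[\AA_\infty|s]$ (which is co-Frobenius on both sides) is a counterexample: for the arrow $0\to 1$ one has $r(0)=s$ while $r(1)=s+1$, and $p_0=p_{0,s}$ is a proper initial piece of $a\,p_1=p_{0,s+1}\notin C$. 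Consequently the deduction ``$r(u')=r(v)=r(u)$, so $u=l(r(u))=l(r(u'))=u'$'' collapses: injectivity of $r$ (which is indeed available from $lr=\mathrm{id}$) is not the issue; the equality $r(u)=r(v)$ you feed into it simply does not hold. Your argument only covers the special case where the two incoming arrows $a\colon u\to v$ and $a'\colon u'\to v$ happen to have $p_u=az$, $p_{u'}=a'z'$ with $z=z'$.

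The general case needs the extra step the paper uses: writing $p_u=az$ and $p_{u'}=a'z'$ with $z,z'$ both initial subpaths of $p_v$, one of them, say $z$, is a subpath of the other; putting $w=t(z)=r(u)$, the hypotheses give $w\in L(C)$ and $l(w)=u$, so the unique maximal path of $\mathcal{B}$ ending at $w$ is $p_u=az$ and every path of $\mathcal{B}$ ending at $w$ is a subpath of it; but $a'z$ lies in $\mathcal{B}$ (as a subpath of $a'z'$), ends at $w$, and is not a subpath of $az$ since $a'\neq a$ — a contradiction. With that repair, the rest of your proposal is sound and matches the paper: the out-degree bound, the exclusion of sinks (hence of finite lines and of one-sided lines ending in a sink) via $r(w)=w\Rightarrow l(w)=w$, the catalogue of components with in- and out-degree at most one as $\AA_\infty$, $\AA_{0,\infty}$, $\CC_n$, and the splitting $C=\bigoplus_i C_i$ from the decomposition of $\mathcal{B}$ along components.
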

\begin{proof}
Let $v$ be a vertex in $C\cap \Gamma$. By Proposition \ref{p.qcf}
there is a unique maximal path $p\in \mathcal{B}$ starting at $v$,
and any path in $\mathcal{B}$ starting at $v$ is a subpath of $p$.
This shows that at most one arrow in $\mathcal{B}$ starts at $v$
(the first arrow of $p$, if $p$ has length $>0$). We show that at
most one arrow in $\mathcal{B}$ ends at $v$, too. Otherwise, if we
assume that two different arrows $a$ and $a'$ in $\mathcal{B}$ end
at $v$, let $s(a)=u$ and $s(a')=u'$ (clearly $u\neq u'$, since at
most one arrow starts at $u$), and let $q$ and $q'$ be the maximal
paths in $\mathcal{B}$ starting at $u$ and $u'$, respectively. Then
$q=az$ and $q'=a'z'$ for some paths $z$ and $z'$ starting at $v$.
But then $z$ and $z'$ are subpaths of $p$, so one of them, say $z$,
is a subpath of the other one. If $w=t(z)$, then $w=r(u)$, so $w\in
L(C)$ and any path in $\mathcal{B}$ ending at $w$ is a subpath of
$q=az$. This provides a contradiction, since $a'z$ is in
$\mathcal{B}$ (as a subpath of $q'$) and ends at $w$,
but it is not a subpath of $q$.\\
We also have that if there is no arrow in $\mathcal{B}$ starting at
a vertex $v$, then there is no arrow in $\mathcal{B}$ ending at $v$
either. Indeed, the maximal path in $\mathcal{B}$ starting at $v$
has length zero, so $r(v)=v$, and then $v\in L(C)$ and $l(v)=v$,
which shows that no arrow in $\mathcal{B}$ ends at
$v$. \\
Now taking the connected components of $C\cap \Gamma$ (regarded just
as an undirected graph), and then considering the (directed) arrows,
we find that $C\cap\Gamma=\bigsqcup\limits_i\Gamma_i$ for some
subquivers $\Gamma_i$ which can be of the types $\AA_\infty$,
$\AA_{0,\infty}$ or $\CC_n$, and this ends the proof.
\end{proof}

\begin{lemma}\label{l.1}
Let $C\subseteq K\Gamma$ be a left co-Frobenius path subcoalgebra.
Let $u,v\in C\cap\Gamma$ be different vertices, and denote by $p_u$
and $p_v$ the maximal paths starting at $u$ and $v$, respectively.
Then $p_u$ is not a subpath of $p_v$.
\end{lemma}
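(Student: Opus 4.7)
The plan is to argue by contradiction: assume $p_u$ is a subpath of $p_v$, and derive $u=v$ from the injectivity of the map $r$ on vertices of $C$, which is provided by the identity $lr=\mathrm{id}$ in Proposition \ref{p.qcf}(I)(c).

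First I would set up notation. Write $p_v$ as a sequence of vertices $v=v_0,v_1,\ldots,v_n=r(v)$. If $p_u$ is a subpath of $p_v$, there is some index $i$ with $v_i=u$ and $v_{i}v_{i+1}\cdots v_{i+m}=p_u$, where $m=\mathrm{length}(p_u)$, so $v_{i+m}=r(u)$. Note that since $u\neq v$ we must actually be in the situation where $p_u$ sits properly inside $p_v$; I do not need to distinguish cases here, just keep track of indices.

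Next, I would use the fact that $\mathcal{B}$ is closed under taking subpaths (explicitly noted after the definition of path subcoalgebra): the tail $p':=v_iv_{i+1}\cdots v_n$ of $p_v$ lies in $\mathcal{B}$ and starts at $u$. By the maximality of $p_u$ (Proposition \ref{p.qcf}, which guarantees $u\in R(C)$ and $p_u$ is the unique maximal path in $\mathcal{B}$ starting at $u$), $p'$ must be a subpath of $p_u$, so $\mathrm{length}(p')=n-i\le m$. On the other hand, since $p_u$ fits inside $p_v$ starting at position $i$, we have $i+m\le n$, i.e.\ $n-i\ge m$. Therefore $n-i=m$, $p'=p_u$, and in particular $v_n=v_{i+m}$, which means $r(v)=r(u)$.

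Finally, I apply the map $l$ to the equality $r(v)=r(u)$. By Proposition \ref{p.qcf}(I)(c), $r(R(C))\subseteq L(C)$ and $lr(w)=w$ for every vertex $w\in C\cap\Gamma$. Hence $v=l(r(v))=l(r(u))=u$, contradicting $u\neq v$. I do not anticipate any real obstacle here: the only subtlety is being sure that the tail $p'$ really lies in $\mathcal{B}$ (which is just the subpath-closure property of path subcoalgebras) and that $r$ is defined on every vertex of $C\cap\Gamma$ (which is clause (c) of Proposition \ref{p.qcf}(I)).
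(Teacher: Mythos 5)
Your proof is correct. It differs from the paper's argument in a small but genuine way: the paper looks at the \emph{initial} segment $q$ of $p_v$ ending at $r(u)$ (at the occurrence where the embedded copy of $p_u$ ends), observes that $q$ contains $p_u$, and invokes the fact from Proposition \ref{p.qcf} that $p_u$ is the unique maximal path of $\mathcal{B}$ \emph{ending} at $r(u)$ (the $L(C)$, or left-injective, side), so $q=p_u$ and hence $v=s(q)=s(p_u)=u$. You instead look at the \emph{terminal} segment $p'$ of $p_v$ starting where the copy of $p_u$ begins, use only the $R(C)$ side (every path of $\mathcal{B}$ starting at $u$ is a prefix of $p_u$) to force $\mathrm{length}(p')=\mathrm{length}(p_u)$ and thus $r(u)=r(v)$, and then conclude $u=v$ from the injectivity of $r$ supplied by $lr=\mathrm{id}$. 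Both arguments rest squarely on clause (c) of Proposition \ref{p.qcf}; yours has the minor advantage that the index bookkeeping makes the argument unambiguous even when vertices repeat along $p_v$ (cyclic quivers), and that it only needs the statement $lr=\mathrm{id}$ rather than the slightly stronger (implicit) fact that the maximal path ending at $r(u)$ is $p_u$ itself, while the paper's version is a one-step comparison of two paths with the same endpoint. The auxiliary facts you rely on (closure of $\mathcal{B}$ under subpaths, and $R(C)$ containing all vertices of $C$) are exactly the ones the paper makes available, so there is no gap.
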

\begin{proof}
Assume otherwise, so $p_u$ is a subpath of $p_v$. We know that $p_u$
and $p_v$ end at $r(u)$ and $r(v)$, respectively. Let $q$ be the
subpath of $p_v$ which starts at $v$ and ends at $r(u)$. Since $p_u$
is a subpath of $p_v$, then $q$ contains $p_u$, too. Then both $q$
and $p_u$ end at $r(u)$, and since by Proposition \ref{p.qcf} $p_u$
is maximal with this property, we get that $q=p_u$. This means that
$u=v$ (as starting points of $p_a$ and $q$), a contradiction.
\end{proof}

Now we are in the position to give the classification result for
left co-Frobenius path subcoalgebras.

\begin{theorem}\label{th.qcf}
Let $C$ be a path subcoalgebra of the path coalgebra $K\Gamma$, and
let $\mathcal{B}$ be a basis of paths of $C$. Then $C$ is left
co-Frobenius if and only if
$C\cap\Gamma=\bigsqcup\limits_i\Gamma_i$, a disjoint union of
subquivers of $\Gamma$ of one of types $\AA_\infty$,
$\AA_{0,\infty}$ or $\CC_n$, $n\geq 1$, and the path subcoalgebra
$C_i$ of $K\Gamma_i$ spanned by the paths of $\mathcal{B}$ contained
in $\Gamma_i$ is of type $K[\AA_{\infty},r]$ if
$\Gamma_i=\AA_{\infty}$, of type $K[\AA_{0,\infty},r]$ if
$\Gamma_i=\AA_{0,\infty}$, of type $K[\CC_n,s]$ with $s\geq 1$ if
$\Gamma_i=\CC_n$, $n\geq 1$, and of type $K$ if $\Gamma_i=\CC_0$. In
this case $C=\bigoplus\limits_{i}C_i$, in particular left
co-Frobenius path subcoalgebras are direct sums of coalgebras of
types $K[\AA_{\infty},r]$, $K[\AA_{0,\infty},r]$, $K[\CC_n,s]$ or
$K$.
\end{theorem}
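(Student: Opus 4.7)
The plan is to prove the two implications separately. For the ``if'' direction, the key observation is that a direct sum of left co-Frobenius coalgebras is itself left co-Frobenius, and each of the building blocks $K[\AA_\infty,r]$, $K[\AA_{0,\infty},r]$, $K[\CC_n,s]$ (with $n\geq 1$), and $K$ has already been shown to be left co-Frobenius in Examples~\ref{gn}, \ref{gn0}, and \ref{lps} (and $K$ is trivially so, being cosemisimple). So the bulk of the work lies in the ``only if'' direction.

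For that direction, assume $C$ is left co-Frobenius. Lemma~\ref{l.graph} delivers both the graph decomposition $C\cap\Gamma=\bigsqcup_i\Gamma_i$ into components of type $\AA_\infty$, $\AA_{0,\infty}$, or $\CC_n$ (with $n\geq 0$) and the coalgebra decomposition $C=\bigoplus_i C_i$. What remains is to identify each $C_i$ with one of the prescribed types. By Proposition~\ref{p.qcf}(I), for every vertex $v$ of $\Gamma_i$ there is a unique maximal path $p_v\in\mathcal{B}$ starting at $v$; this path ends at $r(v)$, and every other path of $\mathcal{B}$ starting at $v$ is a subpath of $p_v$. Consequently, the basis of paths of $C_i$ is completely encoded by the assignment $v\mapsto r(v)$, and I must pin this down in each case. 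When $\Gamma_i=\AA_\infty$, I label its vertices by $\ZZ$: since the arrow $n\to n+1$ lies in $\mathcal{B}$, I get $r(n)\geq n+1>n$; and for $n<m$, the inequality $r(m)\leq r(n)$ would force $p_m$ to be a subpath of $p_n$, contradicting Lemma~\ref{l.1}. Thus $r$ is strictly increasing and $C_i=K[\AA_\infty,r]$. The subquivers $\AA_{0,\infty}$ and $\CC_0$ are handled in the same way, yielding respectively a $K[\AA_{0,\infty},r]$ with $r(0)>0$ and the trivial coalgebra $K$.

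The cyclic case $\Gamma_i=\CC_n$ with $n\geq 1$ is the main obstacle, since paths can wrap around and direct comparisons require care. Let $s_k$ denote the length of the maximal path $p_{\overline{k}}$; because $\CC_n$ contains arrows, $s_k\geq 1$ for every $k$. If $s_{k+1}\leq s_k-1$, then $p_{\overline{k+1}}$ embeds as a subpath of $p_{\overline{k}}$ starting at position $1$, violating Lemma~\ref{l.1}; hence $s_{k+1}\geq s_k$. Iterating around the cycle yields $s_0\leq s_1\leq\dots\leq s_{n-1}\leq s_0$, so all $s_k$ equal a common value $s\geq 1$, and $C_i=K[\CC_n,s]$. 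Combining the four cases produces the stated direct sum decomposition.
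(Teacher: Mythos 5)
Your proposal is correct and follows essentially the same route as the paper: Lemma~\ref{l.graph} for the decomposition $C\cap\Gamma=\bigsqcup_i\Gamma_i$ and $C=\bigoplus_i C_i$, Lemma~\ref{l.1} to force monotonicity of the maximal-path data on each component (hence the forms $K[\AA_\infty,r]$, $K[\AA_{0,\infty},r]$, $K[\CC_n,s]$, $K$), and Examples~\ref{gn}, \ref{gn0}, \ref{lps} together with stability of left co-Frobenius under direct sums for the converse. The only differences are cosmetic, e.g.\ you phrase the linear case via the endpoint function $r$ rather than the lengths $m(n)$, and treat $\CC_1$ inside the general cyclic case.
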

\begin{proof}
By Lemma \ref{l.graph}, $C\cap \Gamma=\bigsqcup\limits_i\Gamma_i$,
and any $\Gamma_i$ is of one of the types $\AA_\infty$,
$\AA_{0,\infty}$ or $\CC_n$, $n\geq 0$. Moreover,
$C=\bigoplus\limits_{i}C_i$, so $C$ is left co-Frobenius if and only
if all $C_i$'s are left co-Frobenius (see for example \cite[Chapter
3]{DNR}). If all $C_i$'s are of the indicated form, then they are
left co-Frobenius by Examples \ref{gn}, \ref{gn0} and \ref{lps}, and
then so is $C$. Assume now that $C$ is left co-Frobenius. Then each
$C_i$ is left co-Frobenius, so we can reduce to the case where
$\Gamma$ is one of $\AA_\infty$, $\AA_{0,\infty}$ or $\CC_n$, and
$C\cap \Gamma=\Gamma$. As before, for each vertex $v$ we denote by
$r(v)$ the end-point of the unique maximal path in $C$ starting at
$v$, and by $p_v$ this
maximal path. Also denote by $m(v)$ the length of $p_v$. \\
Case I. Let $\Gamma=\CC_n$. If $n=0$, then $C\cong K$. If $n=1$,
then $C\cong K[\CC_1,s]$, since $m(\overline{0})=s>0$ because
$\Gamma_1\subset C$, so there must be at least some nontrivial path
in $C$. If $n\geq 2$, then $m(\overline{k})\leq m(\overline{k+1})$
for any $\overline{k}\in \ZZ_n$, since otherwise
$p_{\overline{k+1}}$ would be a subpath of $p_{\overline{k}}$, a
contradiction by Lemma \ref{l.1}. Thus $m(\overline{0})\leq
m(\overline{1})\leq \ldots m(\overline{n-1})\leq m(\overline{0})$,
so $m(\overline{0})= m(\overline{1})= \ldots m(\overline{n-1})=
m(\overline{0})=s$ for some $s\geq 0$. Since $C\cap \Gamma=\Gamma$,
there are non-trivial
paths in $C$, so $s>0$, and then clearly $C\cong K[\CC_n,s]$. \\
Case II. If $\Gamma=\AA_\infty$ or $\Gamma=\AA_{0,\infty}$, then for
any $n$ (in $\ZZ$ if $\Gamma=\AA_\infty$, or in $\NN$ if
$\Gamma=\AA_{0,\infty}$) $m(n)\leq m(n+1)$ holds, otherwise
$p_{n+1}$ would be a subpath of $p_n$, again a contradiction. Now if
we take $r(n)=n+m(n)$ for any $n$, then $r$ is a strictly increasing
function. Clearly $r(n)>n$, since $m(n)=0$ would contradict $C\cap
\Gamma=\Gamma$. Now it is obvious that $C\cong K[\Gamma,r]$.
\end{proof}

\begin{corollary} \label{clascoFrobenius}
Let $C\subseteq K\Gamma$ be a left and right co-Frobenius path
subcoalgebra. Then $C$ is a direct sum of coalgebras of the type
$K[\AA_{\infty}|s]$, $K[\CC_n,s]$ or $K$.
\end{corollary}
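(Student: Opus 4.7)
The plan is to combine Theorem \ref{th.qcf} with the right co-Frobenius analysis that was already carried out case by case in Examples \ref{gn}, \ref{gn0} and \ref{lps}. Since a direct sum of coalgebras is left (respectively right) co-Frobenius if and only if each summand is, the problem reduces to deciding, for each of the four types appearing in Theorem \ref{th.qcf}, which instances are simultaneously left and right co-Frobenius.

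First I would invoke Theorem \ref{th.qcf} to write $C=\bigoplus_i C_i$, where each $C_i$ is of type $K[\AA_\infty,r]$, $K[\AA_{0,\infty},r]$, $K[\CC_n,s]$ (with $n\geq 1$, $s\geq 1$), or $K$. Since $C$ is right co-Frobenius, each $C_i$ must also be right co-Frobenius (using the standard fact that a direct summand of a co-Frobenius coalgebra is co-Frobenius; see for example \cite[Chapter 3]{DNR}).

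Next I would eliminate the offending types using the observations already made in the examples. By Example \ref{gn0}, a coalgebra of type $K[\AA_{0,\infty},r]$ is never right co-Frobenius: indeed, $l(0)=0$ but $r(l(0))=r(0)>0$, which violates the right-hand version of Proposition \ref{p.qcf}. Hence no $C_i$ can be of this type. By Example \ref{gn}, a coalgebra of type $K[\AA_\infty,r]$ is right co-Frobenius precisely when $r$ has the form $r(n)=n+s$ for some fixed positive integer $s$, i.e.\ exactly when it is of type $K[\AA_\infty|s]$; so any $C_i$ of type $K[\AA_\infty,r]$ must in fact be of the form $K[\AA_\infty|s]$. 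Finally, by Example \ref{lps}, the coalgebras $K[\CC_n,s]$ (and of course $K$) are finite dimensional, hence automatically both left and right co-Frobenius, so no restriction arises for these summands.

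Putting these cases together yields the claimed description. There is no real obstacle beyond bookkeeping; the only substantive ingredient is the characterization of right co-Frobenius $K[\AA_\infty,r]$'s, and that has already been established inside Example \ref{gn}.
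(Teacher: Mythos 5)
Your proposal is correct and follows essentially the same route as the paper: the paper's proof is exactly "Theorem \ref{th.qcf} plus the right co-Frobenius discussions at the end of Examples \ref{gn}, \ref{gn0} and \ref{lps}," which is what you spell out (summand-wise reduction, elimination of $K[\AA_{0,\infty},r]$, the condition $r(n)=n+s$ for $K[\AA_\infty,r]$, and finite dimensionality for $K[\CC_n,s]$ and $K$). No gaps.
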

\begin{proof}
It follows directly from Theorem \ref{th.qcf} and the discussion at
the end of each of Examples \ref{gn}, \ref{gn0} and \ref{lps},
concerned to the property of being left and right co-Frobenius.
\end{proof}

\begin{remark}
{\rm (1) We have a uniqueness result for the representation of a
left co-Frobenius path subcoalgebras as a direct sum of coalgebras
of the form $K[\AA_{\infty},r]$, $K[\AA_{0,\infty},r]$, $K[\CC_n,s]$
or $K$. To see this, an easy computation shows that the dual algebra
of a coalgebra of any of these four types does not have non-trivial
central idempotents, so it is indecomposable as an algebra. Now if
$(C_i)_{i\in I}$ and $(D_j)_{j\in J}$ are two families of coalgebras
with indecomposable dual algebras such that $\oplus_{i\in
I}C_i\simeq \oplus _{j\in J}D_j$ as coalgebras, then there is a
bijection $\phi:J\rightarrow I$ such that $D_j\simeq C_{\phi(j)}$
for any $j\in J$. Indeed, if $f:\oplus_{i\in I}C_i\rightarrow \oplus
_{j\in J}D_j$ is a coalgebra isomorphism, then the dual map
$f^*:\prod_{j\in J}D_j^*\rightarrow \prod_{i\in I}C_i^*$ is an
algebra isomorphism. Since all $C_i^*$'s and $D_j^*$'s are
indecomposable, there is a bijection $\phi:J\rightarrow I$ and some
algebra isomorphisms $\gamma_j:D_j^*\rightarrow C_{\phi(j)}^*$ for
any $j\in J$, such that for any $(d_j^*)_{j\in J}\in \prod_{j\in
J}D_j^*$, the map $f^*$ takes $(d_j^*)_{j\in J}$ to the element of
$\prod_{i\in I}C_i^*$ having $\gamma_j(d_j^*)$ on the $\phi(j)$-th
slot. Regarding $C=(C_i)_{i\in I}$ as a left $C^*$-module, and
$D=\oplus _{j\in J}D_j$ as a left $D^*$-module in the usual way,
with actions denoted by $\cdot$, the relation $f(f^*(d^*)\cdot
c)=d^*\cdot f(c)$ holds for any $c\in C$ and $d^*\in D^*$. This
shows that $f$ induces coalgebra isomorphisms
$C_{\phi(j)}\simeq D_j$ for any $j\in J$.\\
(2) The coalgebras of types  $K[\AA_{\infty},r]$,
$K[\AA_{0,\infty},r]$, $K[\CC_n,s]$ or $K$ can be easily classified
if we take into account that the sets of grouplike elements are just
the vertices and the non-trivial skew-primitives are scalar
multiples of the arrows. There are no isomorphic coalgebras of two
different types among these four types. Moreover: (i)
$K[\AA_{\infty},r]\simeq K[\AA_{\infty},r']$ if and only if there is
an integer $h$ such that $r'(n)=r(n+h)$ for any integer $n$; (ii)
$K[\AA_{0,\infty},r]\simeq K[\AA_{0,\infty},r']$ if and only if
$r=r'$; (iii) $K[\CC_n,s]\simeq K[\CC_m,s']$ if and only $n=m$ and
$s=s'$.}
\end{remark}

\section{Examples}  \label{snew}

It is known (see \cite{ni}, \cite{cm}) that any pointed coalgebra
can be embedded in a path coalgebra. Thus it is expected that there
is a large variety of co-Frobenius subcoalgebras of path coalgebras
if we do not restrict only to the class of path subcoalgebras. The
aim of this section is to provide several such examples. We first
explain a simple construction connecting incidence coalgebras and
path coalgebras, and producing examples as we wish.

As a pointed coalgebra,  any incidence coalgebra can be embedded in
a path coalgebra. However, there  is a more simple way to define
such an embedding for incidence coalgebras than for arbitrary
pointed coalgebras. Indeed, let $X$ be a locally finite partially
ordered set. Consider the quiver $\Gamma$ with vertices the elements
of $X$, and such that there is an arrow from $x$ to $y$ if and only
if $x<y$ and there is no element $z$ with $x<z<y$. With this
notation, it is an easy computation to check the following.

\begin{proposition} \label{propembedding}
The linear map $\phi:KX\rightarrow K\Gamma$, defined by
$$\phi(e_{x,y})=\sum_{p\; {\rm path}\atop s(p)=x,t(p)=y}p$$ for any
$x,y\in X, x\leq y$, is an injective coalgebra morphism.
\end{proposition}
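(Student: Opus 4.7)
The plan is to verify the three things needed in turn: well-definedness, compatibility with $(\Delta,\varepsilon)$, and injectivity. Each reduces quickly to a combinatorial statement about saturated chains in $[x,y]$.

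First I would check that $\phi$ is well-defined by noting that any path from $x$ to $y$ in $\Gamma$ uses only covering relations, hence all its vertices lie in $[x,y]$; since $X$ is locally finite, $[x,y]$ is finite and so the set of paths $x\to y$ in $\Gamma$ is finite. For the counit, I would split into cases: when $x=y$ the only path is the vertex $x$, giving $\varepsilon_\Gamma(\phi(e_{x,x}))=1=\varepsilon_X(e_{x,x})$; when $x<y$ every path has positive length, so $\varepsilon_\Gamma$ kills each summand and both sides vanish.

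For comultiplicativity the key observation is a bijection
$$\{(p,q,r):\, p\text{ a path }x\to y,\ p=qr\}\ \longleftrightarrow\ \{(z,q,r):\,x\leq z\leq y,\ q:x\to z,\ r:z\to y\},$$
where the forward map sends $p=qr$ to $(z,q,r)$ with $z=t(q)=s(r)$ (which lies in $[x,y]$ because concatenating arrows that cover gives $x\leq z\leq y$), and the inverse sends $(z,q,r)$ to the concatenation. Using this bijection one directly computes
$$\Delta_\Gamma(\phi(e_{x,y}))=\sum_{p:x\to y}\sum_{qr=p}q\otimes r=\sum_{x\leq z\leq y}\Bigl(\sum_{q:x\to z}q\Bigr)\otimes\Bigl(\sum_{r:z\to y}r\Bigr)=(\phi\otimes\phi)\Delta_X(e_{x,y}).$$
Note that for every $z\in[x,y]$ there exists at least one path $x\to z$ and one $z\to y$ in $\Gamma$, because any maximal chain in the finite poset $[x,z]$ (resp. $[z,y]$) is saturated and thus yields a path in $\Gamma$; so no terms are missing on the right.

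Finally, for injectivity, suppose $\phi\bigl(\sum_{x\leq y}\alpha_{x,y}e_{x,y}\bigr)=0$. For each pair $x\leq y$ the finiteness of $[x,y]$ provides a saturated chain, i.e.\ a path $p_0:x\to y$ in $\Gamma$. This $p_0$ appears only in the expansion of $\phi(e_{x,y})$ (since its source and target identify the pair $(x,y)$), and with coefficient $1$; by linear independence of paths in $K\Gamma$ we conclude $\alpha_{x,y}=0$. The only nontrivial point in the whole argument is the bijection used in the comultiplicativity check, and even this is essentially tautological once one observes that splitting a path in $\Gamma$ at an intermediate vertex corresponds to choosing an element of the interval.
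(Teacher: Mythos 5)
Your proof is correct, and it is exactly the straightforward verification the paper has in mind when it states the result as ``an easy computation'': the paper gives no further argument, and your bijection between decompositions $p=qr$ of paths $x\to y$ and triples $(z,q,r)$ with $z\in[x,y]$, together with the saturated-chain observation used for injectivity, fills in that computation correctly.
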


Note that in the previous proposition $\phi(KX)$ is in general a
subcoalgebra of $K\Gamma$ which is not a path subcoalgebra. This
suggests that when we deal with left co-Frobenius subcoalgebras of
incidence coalgebras, which of course embed themselves in $K\Gamma$
(usually not as path subcoalgebras),
 structures that are more complicated than those of left co-Frobenius path
subcoalgebras can appear. Thus the classification of left
co-Frobenius subcoalgebras of incidence coalgebras is probably more
difficult. The next example is evidence in this direction.

\begin{example} \label{examplediamond}
Let $s\geq 2$ and $X=\{ a_n|n\in \ZZ\} \cup (\cup_{n\in \ZZ}\{
b_{n,i}|1\leq i\leq s\})$ with the ordering $\leq$ such that
$a_n<b_{n,i}<a_{n+1}$ for any integer $n$ and any $1\leq i\leq s$,
and $b_{n,i}$ and $b_{n,j}$ are not comparable for any $n$ and
$i\neq j$.\\
Let $C$ be the subcoalgebra of $KX$ spanned by the following
elements\\
$\bullet$ the elements $e_{x,x}$, $x\in X$.\\
$\bullet$ all segments $e_{x,y}$ of length 1.\\
$\bullet$ the segments $e_{a_n,a_{n+1}}$, $n\in \ZZ$. \\
$\bullet$ the segments $e_{b_{n,i},b_{n+1,i}}$, with $n\in \ZZ$ and
$1\leq i\leq s$.\\
Then by applying Proposition \ref{p.qcf}, we see that $C$ is co-Frobenius.\\
If we take the subcoalgebra $D$ of $C$ obtained by restricting to
the non-negative part of $X$, i.e. $D$ is spanned by the elements
$e_{x,y}$ in the indicated basis of $C$ with both $x$ and $y$ among
$\{ a_n|n\geq 0\} \cup (\cup_{n\geq 0}\{ b_{n,i}|1\leq i\leq s\})$,
we see that $D$ is left co-Frobenius, but not right co-Frobenius.\\
Now let $\Gamma$ be the quiver associated to the ordered set $X$ as
in the discussion above.
{\small
$$
\xymatrix{
 \dots   & b_{0,1} \ar[dr] & & b_{1,1}\ar[dr] & & \dots  & b_{n,1}\ar[dr] & \dots \\
\dots \; a_0\ar[ur]\ar[r]\ar[ddr] & b_{0,2}\ar[r] & a_1\ar[ur]\ar[r]\ar[ddr] & b_{1,2}\ar[r] & a_2 & \dots\;\;  a_n\ar[ur]\ar[r]\ar[ddr] & b_{n,2}\ar[r] & a_{n+1} \dots \\
&  \dots & & \dots & & \dots  & \dots & \\
\dots &  b_{0,s}\ar[uur] & &  b_{1,s}\ar[uur] & &  &
b_{n,s}\ar[uur] & \dots }$$} 
%\txt<8pc>
%{%\small
%$$
%\xymatrix{
% \dots   & b_{0,1} \ar[dr] & & \dots & & b_{n,1}\ar[dr] & \dots \\
%\dots  a_0\ar[ur]\ar[r]\ar[ddr] & b_{0,2}\ar[r] & a_1\ar[ur]\ar[r]\ar[ddr] & \dots & a_n\ar[ur]\ar[r]\ar[ddr] & b_{n,2}\ar[r] & a_{n+1} \dots \\
%&  \dots &  & \dots & & \dots & \\
%\dots &  b_{0,s}\ar[uur] & & & &
%b_{n,s}\ar[uur] & \dots }$$}

If $\phi:KX\rightarrow K\Gamma$ is the
embedding described in Proposition \ref{propembedding}, then
$\phi(C)$ is a co-Frobenius subcoalgebra of $K\Gamma$. We see that
$\phi(C)$ is the subspace of $K\Gamma$ spanned by  the vertices of
$\Gamma$, the paths of length 1, the paths
$[b_{n,i}a_{n+1}b_{n+1,i}]$ with $n\in \ZZ$ and $1\leq i\leq s$, and
the elements $\sum_{1\leq i\leq s}[a_nb_{n,i}a_{n+1}]$ with $n\in
\ZZ$, thus $\phi(C)$ is not a path subcoalgebra. Here we denoted by
$[b_{n,i}a_{n+1}b_{n+1,i}]$ and $[a_nb_{n,i}a_{n+1}]$ the paths
following the indicated vertices and the arrows between them. By
restricting to the non-negative part of $X$, a similar description
can be given for $\phi(D)$, a subcoalgebra of $K\Gamma$ which is
left co-Frobenius, but not right co-Frobenius.
\end{example}

It is possible to embed some of the co-Frobenius path subcoalgebras
in other path coalgebras as subcoalgebras which are not path
subcoalgebras.

\begin{example}
Consider the quiver $\AA_{\infty}$ with vertices indexed by the
integers, with the path from $i$ to $j$  denoted by $p_{i,j}$.
Consider the path subcoalgebra $D=K[\AA_{\infty}|2]$, with basis $\{
p_{i,i},p_{i,i+1},p_{i,i+2}|i\in \ZZ\}$. We also consider the quiver
$\Gamma$ below
{\small
$$
\xymatrix{
\dots  & b_0 \ar[dr] & & b_1\ar[dr] & & \dots   & b_n\ar[dr] & \dots \\
\dots \;\;\;  a_0\ar[ur]\ar[rr] & & a_1\ar[ur]\ar[rr] & & a_2 & \dots \;\;\;
a_n\ar[ur]\ar[rr] & & a_{n+1}\;\; \dots }$$ }
%$$
%\xymatrix{
%\dots  & b_0 \ar[dr] &  & \dots & & b_n\ar[dr] & \dots \\
%\dots  a_0\ar[ur]\ar[rr] & & a_1\ar[ur]\ar[r]  & \dots &
%a_n\ar[ur]\ar[rr] & & a_{n+1} \dots }$$ 

Then $\AA_{\infty}$ is a
subquiver of $\Gamma$ if we identify $a_i$ with $2i$ and $b_i$ with
$2i+1$ for any integer $i$. Thus $K\AA_{\infty}$ is a subcoalgebra
of $K\Gamma$ in the obvious way, and then so is $D$. However, there
is another way to embed $D$ in $K\Gamma$. Indeed, the linear map
$\phi:D\rightarrow K\Gamma$, defined such that
$$\phi(p_{2i,2i})=a_i, \phi(p_{2i+1,2i+1})=b_i,$$
$$\phi(p_{2i,2i+1})=[a_ib_i], \phi(p_{2i+1,2i+2})=[b_ia_{i+1}],$$
$$\phi(p_{2i,2i+2})=[a_ia_{i+1}]+[a_ib_ia_{i+1}],$$
$$\phi(p_{2i+1,2i+3})=[b_ia_{i+1}b_{i+1}]$$
for any $i\in \ZZ$, is an injective morphism of coalgebras. Here we
denoted  by $[a_ib_i]$, $[a_ib_ia_{i+1}]$, etc,  the paths following
the respective vertices and arrows. We conclude that the
subcoalgebra $C=\phi(D)$ of $K\Gamma$, spanned by
 all vertices $a_n,b_n$, all arrows
$[a_na_{n+1}],[a_nb_n],[b_na_{n+1}]$ and the elements
$[a_nb_na_{n+1}]+[a_na_{n+1}]$ and $[b_na_{n+1}b_{n+1}]$, is
co-Frobenius. Note that $D$ is not a path subcoalgebra of $K\Gamma$.
This can be also seen as the subcoalgebra of the incidence coalgebra
of $\ZZ$ with basis consisting of segments of length at most $2$.
\end{example}

Note that in the above example, we can also consider a similar
situation but with all segments $e_{n,n+i}$ of the incidence
coalgebra of $\ZZ$ which have length less or equal to a certain
positive integer $s$ ($i\leq s$); the same properties as above would
then hold for this situation.

\begin{example}
We consider the same situation as above, but we restrict the quiver
$\Gamma$ to  the non-negative part:
{\small $$
\xymatrix{
  & b_0 \ar[dr] & & b_1\ar[dr] & & \dots & & b_n\ar[dr] & \dots \\
 a_0\ar[ur]\ar[rr] & & a_1\ar[ur]\ar[rr] & & a_2 & \dots & a_n\ar[ur]\ar[rr] & & a_{n+1} \;\dots
}$$ }
%$$
%\xymatrix{
%  & b_0 \ar[dr] &  & \dots & & b_n\ar[dr] & \dots \\
% a_0\ar[ur]\ar[rr] & & a_1\ar[ur]\ar[r]  & \dots & a_n\ar[ur]\ar[rr] & & a_{n+1} \dots
%}$$

Equivalently, we consider the subcoalgebra of the incidence
coalgebra of $\NN$ with a basis of all segments of length less or
equal to $2$ (or $\leq s$ for more generality). This coalgebra is
left co-Frobenius but not right co-Frobenius, it is a subcoalgebra
of an incidence coalgebra, and it can also be regarded as a
subcoalgebra of a path coalgebra, but without a basis of paths.
\end{example}

Now we prove a simple, but useful result, which shows that the
category of incidence coalgebras is closed under tensor product of
coalgebras.

\begin{proposition} \label{tensorincidence}
Let $X,Y$ be locally finite partially ordered sets. Consider on
$X\times Y$ the order $(x,y)\leq(x',y')$ if and only if $x\leq y$
and $x'\leq y'$. Then there is an isomorphism of coalgebras
$K(X\times Y)\cong KX\otimes KY$.
\end{proposition}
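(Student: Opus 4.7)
The plan is to define a linear map $\phi:K(X\times Y)\to KX\otimes KY$ on the basis of segments by
$$\phi\bigl(e_{(x,y),(x',y')}\bigr)=e_{x,x'}\otimes e_{y,y'},$$
and then verify that $\phi$ is a coalgebra isomorphism. The key observation that makes everything go through is that the product order has the interval factorization
$$[(x,y),(x',y')]=[x,x']\times[y,y'],$$
so in particular $X\times Y$ is locally finite whenever $X$ and $Y$ are, and the set of pairs $(x,y)\leq(x',y')$ in $X\times Y$ is in bijection with the set of pairs of comparable pairs $x\leq x'$ in $X$ and $y\leq y'$ in $Y$. Under this bijection the segments form a basis of $K(X\times Y)$ while the tensor products $e_{x,x'}\otimes e_{y,y'}$ form a basis of $KX\otimes KY$, so $\phi$ is automatically a linear isomorphism.

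Next I would verify compatibility with comultiplication. Using the interval factorization above, the incidence comultiplication on $K(X\times Y)$ becomes
$$\Delta\bigl(e_{(x,y),(x',y')}\bigr)=\sum_{x\leq z\leq x'}\sum_{y\leq w\leq y'}e_{(x,y),(z,w)}\otimes e_{(z,w),(x',y')}.$$
Applying $\phi\otimes\phi$ gives $\sum_{z,w}e_{x,z}\otimes e_{y,w}\otimes e_{z,x'}\otimes e_{w,y'}$, which matches $\Delta_{KX\otimes KY}(e_{x,x'}\otimes e_{y,y'})$ under the standard tensor product coalgebra structure (involving the twist between the two middle tensor factors). The counit compatibility is immediate from
$$\epsilon\bigl(e_{(x,y),(x',y')}\bigr)=\delta_{(x,y),(x',y')}=\delta_{x,x'}\delta_{y,y'}=(\epsilon\otimes\epsilon)\bigl(e_{x,x'}\otimes e_{y,y'}\bigr).$$

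There is really no obstacle here; the proof is a direct unpacking of definitions, the content being entirely concentrated in the interval factorization $[(x,y),(x',y')]=[x,x']\times[y,y']$. The only mild care needed is the bookkeeping of the twist implicit in the tensor product coalgebra structure, which is why writing the comultiplication on $K(X\times Y)$ as a double sum over $z$ and $w$ (rather than as a single sum over $(z,w)$) makes the matching with $\Delta_{KX\otimes KY}$ transparent.
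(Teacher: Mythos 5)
Your proof is correct and follows essentially the same route as the paper: both define $\phi(e_{(x,y),(x',y')})=e_{x,x'}\otimes e_{y,y'}$ on the basis of segments, note it is a linear bijection, and check comultiplication (as the same double sum over $z$ and $w$, with the tensor-coalgebra twist) and counit compatibility. Your explicit remark that $[(x,y),(x',y')]=[x,x']\times[y,y']$ gives local finiteness of $X\times Y$ is the same observation the paper makes in passing.
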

\begin{proof}
It is clear that $X\times Y$ is locally finite. We show that the
natural isomorphism of vector spaces $\varphi:K(X\times
Y)\rightarrow KX\otimes KY$,
$\varphi(e_{(x,y),(x',y')})=e_{x,x'}\otimes e_{y,y'}$ is a morphism
of coalgebras. This is well defined by the definition of the order
relation on $X\times Y$. For comultiplication we have
\begin{eqnarray*}
& \sum \varphi(e_{(x,y),(x',y')})_1\otimes (e_{(x,y),(x',y')})_2 = & \\
 = & \sum\limits_{x\leq a\leq x'}\sum\limits_{y\leq b\leq y'}e_{x,a}\otimes e_{y,b}\otimes e_{a,x'}\otimes e_{x',b} & \\
 = & \sum\limits_{(x,y)\leq (a,b)\leq (x',y')}\varphi(e_{(x,y),(a,b)})\otimes \varphi(e_{(a,b),(x',y')}) & \\
 = & \varphi((e_{(x,y),(x',y')})_1)\otimes
\varphi((e_{(x,y),(x',y')})_2) & 
\end{eqnarray*}
and it is also easy to see that $\varepsilon_{KX\otimes
KY}\circ\varphi =\varepsilon_{K(X\times Y)}$.
\end{proof}

\begin{example}
Consider the ordered set $(\ZZ \times \ZZ,\leq)$, with order given
by the direct product of the orders of $(\ZZ,\leq)$ and
$(\ZZ,\leq)$. Thus $(i,j)\leq (p,q)$ if and only if $i\leq p$ and
$j\leq q$. We know from Proposition \ref{tensorincidence} that
$\psi:K\ZZ \otimes K\ZZ\rightarrow K(\ZZ\times \ZZ)$,
$\psi(e_{i,p}\otimes
e_{j,q})=e_{(i,j),(p,q)}$, is an isomorphism of coalgebras.\\
With the notation preceding Proposition \ref{propembedding}, the
quiver $\Gamma$ associated to the locally finite ordered set $(\ZZ
\times \ZZ,\leq)$ is
$$
\xymatrix{
 & \dots & \dots & \dots &\\
\dots \ar[r] & a_{n-1,k+1}\ar[r]\ar[u] & a_{n,k+1}\ar[r]\ar[u] & a_{n+1,k+1}\ar[r]\ar[u] & \dots \\
\dots \ar[r] & a_{n-1,k}\ar[r]\ar[u] & a_{n,k}\ar[r]\ar[u]\ar[r]\ar[u] & a_{n+1,k}\ar[r]\ar[u] & \dots \\
\dots \ar[r] & a_{n-1,k-1}\ar[r]\ar[u] & a_{n,k-1}\ar[r]\ar[u] & a_{n+1,k-1}\ar[r]\ar[u] & \dots \\
& \dots \ar[u] & \dots \ar[u] & \dots \ar[u] & }$$ where we just
denoted the  vertices by $a_{n,k}$ instead of just $(n,k)$. Let
$\phi:K(\ZZ \times \ZZ)\rightarrow K\Gamma$ be the embedding from
Proposition \ref{propembedding}. If we consider the subcoalgebra
$K[\AA_{\infty}|1]$ of $K\ZZ$, then $K[\AA_{\infty}|1]\otimes
K[\AA_{\infty}|1]$ is a subcoalgebra of $K\ZZ\otimes K\ZZ$, so then
$C=\phi \psi(K[\AA_{\infty}|1]\otimes K[\AA_{\infty}|1])$, which is
the subspace spanned by the vertices of $\Gamma$, the arrows of
$\Gamma$, and the elements
$[a_{n,k}a_{n+1,k}a_{n+1,k+1}]+[a_{n,k}a_{n,k+1},a_{n+1,k+1}]$, is a
subcoalgebra of $K\Gamma$. Since $K[\AA_{\infty}|1]$ is
co-Frobenius, and the tensor product of co-Frobenius coalgebras is
co-Frobenius (see \cite[Proposition 4.15]{IG}), we obtain that $C$
is a co-Frobenius coalgebra. Alternatively, it can be seen that
$\psi(K[\AA_{\infty}|1]\otimes K[\AA_{\infty}|1])$, which is the
subspace spanned by the elements $e_{(n,k),(n,k)},
e_{(n,k),(n+1,k)}, e_{(n,k),(n,k+1)}, e_{(n,k),(n+1,k+1)}$ with
arbitrary $n,k\in \ZZ$, is co-Frobenius by applying Proposition
\ref{p.qcf}. $C$ can be seen as both a subcoalgebra of an incidence
coalgebra and of a path coalgebra, but not with a basis of paths. We
note that $C$ is not even isomorphic to a path subcoalgebra. Indeed,
if it were so, it should be isomorphic to some $K[\AA_\infty|s]$,
since it is infinite dimensional and indecomposable. But in $C$, for
any grouplike element $g$ there are precisely two other grouplike
elements $h$ with the property that the set of non-trivial
$(h,g)$-skew-primitive elements is nonempty, while for any grouplike
element g of $K[\AA_\infty|s]$ there is only one such $h$.
\end{example}

With similar arguments, we can give a more general version of the
previous example, by considering finite tensor products of
coalgebras of type $K[\AA_\infty|s]$, as follows.

\begin{example}
Let $D=K[\AA_\infty|s_1]\otimes K[\AA_\infty|s_2]\otimes \ldots
\otimes K[\AA_\infty|s_m]$, where $m\geq 2$ and $s_1,\ldots ,s_m$
are positive integers. Then $D$ is co-Frobenius as a tensor product
of co-Frobenius coalgebras, and $D$ embeds in the $m$-fold tensor
product $K\ZZ\otimes K\ZZ \otimes \ldots \otimes K\ZZ$. But this
last tensor product is isomorphic to the incidence coalgebra of the
ordered set $\ZZ^m=\ZZ\times \ZZ\times \ldots \times \ZZ$, with the
direct product order. The image of $D$ via this embedding is the
subcoalgebra $E$ of $K(\ZZ\times \ZZ\times \ldots \times \ZZ)$
spanned by all the segments $e_{(n_1,\ldots,n_m),(k_1,\ldots,k_m)}$
with $n_1\leq k_1\leq n_1+s_1,\,\ldots \, , n_m\leq k_m\leq n_m+s_m$. \\
Now if we consider the quiver $\Gamma$ associated to the ordered set
$\ZZ\times \ZZ\times \ldots \times \ZZ$ as in the beginning of this
section, we have an embedding of $K(\ZZ\times \ZZ\times \ldots
\times \ZZ)$ in $K\Gamma$. Denote the vertices of $\Gamma$ by
$a_{n_1,\ldots,n_m}$. The image of $E$ through this embedding is the
subcoalgebra $C$ of $K\Gamma$ spanned by all the elements of the
form $S(\Gamma, (n_1,\ldots,n_m),(k_1,\ldots,k_m))$, with
$n_1,\ldots,n_m,k_1,\ldots,k_m$ integers such that $n_1\leq k_1\leq
n_1+s_1,\ldots , n_m\leq k_m\leq n_m+s_m$, where by $S(\Gamma,
(n_1,\ldots,n_m),(k_1,\ldots,k_m))$ we denote the sum of all paths
in $\Gamma$ starting at $a_{n_1,\ldots,n_m}$ and ending at
$a_{k_1,\ldots,k_m}$. Thus $C$ is a co-Frobenius subcoalgebra of
$K\Gamma$, which is also isomorphic to a subcoalgebra of an
incidence coalgebra. However, $C$ is not a path subcoalgebra, and
not even isomorphic to a path subcoalgebra. Indeed, for any
grouplike element $g$ of $E$ there are precisely $m$ grouplike
elements $h$ for which there are non-trivial $(h,g)$-skew-primitive
elements, while in a co-Frobenius path subcoalgebra for any
grouplike element $g$ there is at most one such $h$. \end{example}

\begin{remark}
We note that the co-Frobenius coalgebra $C$ constructed in Example
\ref{examplediamond} is not isomorphic to a coalgebra of the form
$K[\AA_\infty|s_1]\otimes K[\AA_\infty|s_2]\otimes \ldots \otimes
K[\AA_\infty|s_m]$. Indeed, if $g=b_{n,i}$ there exists exactly one
grouplike element $h$ of $C$ such that there are non-trivial
$(h,g)$-skew-primitive elements (this is $h=a_{n+1}$), and if
$g=a_n$ there exist $s$ such grouplike elements $h$ (these are
$b_{n,1},\ldots,b_{n,s}$). On the other hand, in
$K[\AA_\infty|s_1]\otimes K[\AA_\infty|s_2]\otimes \ldots \otimes
K[\AA_\infty|s_m]$ for any grouplike element $g$ there exist
precisely $m$ such elements $h$.
\end{remark}

 We end with another explicit
example, which shows that there are  co-Frobenius subcoalgebras of
path coalgebras that are isomorphic neither to a path subcoalgebra
nor to a subcoalgebra of an incidence coalgebra.

\begin{example}
Let $\Gamma$ be the graph:
$$
\xymatrix{ \dots\ar[r] & a_0\ar[r]^{y_0}\ar@(ul,ur)[]^{x_0} &
a_1\ar[r]^{y_1}\ar@(ul,ur)[]^{x_1} &
a_2\ar[r]^{y_2}\ar@(ul,ur)[]^{x_2} & \dots \ar[r] &
a_n\ar[r]^{y_n}\ar@(ul,ur)[]^{x_n} &\dots }$$ and let $C$ be the
subcoalgebra of the path coalgebra of $\Gamma$ having a basis the
elements $a_n,x_n,y_n$ and $y_n+x_ny_n$. This is, in fact,
isomorphic to $K[\CC_1|1]\otimes K[\AA_\infty|1]$, so it is
co-Frobenius. By the classification theorem for co-Frobenius path
subcoalgebras and the structure of the skew-primitive elements of
$C$, we see that $C$ is not isomorphic to a path subcoalgebra. We
note that it is not isomorphic either to a subcoalgebra of an
incidence coalgebra, because in an incidence coalgebra, if $g$ is
any grouplike element, there is no $(g,g)$- skew-primitive element,
while in $C$ for each grouplike $g=a_n$, $x_n$ is a $(g,g)$-
skew-primitive.
\end{example}

%----------------------------------------------------------------

\section{Hopf algebra structures on path subcoalgebras}\label{s6}
In this section we discuss the possibility of extending the
coalgebra structure of a path subcoalgebra to a Hopf algebra
structure. First of all, it is a simple application of Proposition
\ref{proppathcoFrob} to see when a finite dimensional path coalgebra
has a Hopf algebra structure.

\begin{proposition}
If the path coalgebra $K\Gamma$ is finite dimensional, then it has a
Hopf algebra structure if and only if it is cosemisimple, i.e.
$\Gamma$ has no arrows.
\end{proposition}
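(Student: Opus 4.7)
The plan is to combine Corollary \ref{proppathcoFrob} with the classical Larson--Sweedler theorem, which asserts that every finite dimensional Hopf algebra is Frobenius, and equivalently has non-zero left (and right) integrals. In the coalgebra language, this means that the underlying coalgebra of any finite dimensional Hopf algebra is co-Frobenius.

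For the implication $(\Rightarrow)$, suppose $K\Gamma$ carries a Hopf algebra structure. Since $\dim K\Gamma<\infty$, by Larson--Sweedler $K\Gamma$ is a Frobenius algebra; dualizing, $K\Gamma$ is a co-Frobenius coalgebra. Now Corollary \ref{proppathcoFrob} says that a path coalgebra is co-Frobenius if and only if $\Gamma$ has no arrows, and this yields the conclusion. (Note that the hypothesis $\dim K\Gamma<\infty$ means $\Gamma_0$ and $\Gamma_1$ are finite and $\Gamma$ has no oriented cycles, which is precisely the setting to which Corollary \ref{proppathcoFrob} applies.)

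For the converse $(\Leftarrow)$, assume $\Gamma$ has no arrows, so that $K\Gamma$ is the grouplike coalgebra on the finite set $\Gamma_0$, i.e.\ the coalgebra with basis $\Gamma_0$ and every vertex grouplike. Equipping the finite set $\Gamma_0$ with any group structure (e.g.\ the cyclic group of order $|\Gamma_0|$), the associated group algebra $K[\Gamma_0]$ is a Hopf algebra whose underlying coalgebra coincides with $K\Gamma$.

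There is no real obstacle here: the statement is essentially a corollary of \ref{proppathcoFrob}, with the ``only if'' direction resting on the standard fact that finite dimensional Hopf algebras are automatically co-Frobenius, and the ``if'' direction being the trivial construction of a group algebra on a finite set of grouplikes.
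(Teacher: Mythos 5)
Your proof is correct and follows essentially the same route as the paper: the forward direction uses the fact that a finite dimensional Hopf algebra has non-zero integrals (hence is co-Frobenius as a coalgebra) together with Corollary \ref{proppathcoFrob}, and the converse endows the finite set of vertices with a group structure to get a group Hopf algebra. Nothing further is needed.
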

\begin{proof}
If the finite dimensional $K\Gamma$ has a Hopf algebra structure,
then it has non-zero integrals, so it is left (and right)
co-Frobenius, and $K\Gamma$ is cosemisimple by Proposition
\ref{proppathcoFrob}. Conversely, if there are no arrows, then
$K\Gamma$ can be endowed with the group Hopf algebra structure
obtained if we consider a group structure on the set of vertices.
\end{proof}

Next, we are interested in finding examples of Hopf algebra
structures that can be defined on some path subcoalgebras. At this
point we discuss only cases where the resulting Hopf algebra has
non-zero integrals, i.e. it is left (or right) co-Frobenius. Thus
the path subcoalgebras that we consider are among the ones in
Corollary \ref{clascoFrobenius}. We ask the following general
question.

{\it {\bf PROBLEM.} Which of the left and right co-Frobenius path
subcoalgebras (classified in Corollary \ref{clascoFrobenius}) can be
endowed with a Hopf algebra structure?}

In the rest of this section we solve the problem in the case where
$K$ is a field containing primitive roots of unity of any positive
order, in particular $K$ has characteristic zero. We will make this
assumption on $K$ from this point on. We just note that some of the
constructions can be also done in positive characteristic, if we
just require that $K$ contains certain primitive roots of unity and
the characteristic of $K$ is large enough.

\begin{proposition}
(I) Let $s>0$ be an integer. Let $q$ be a primitive $(s+1)$th root
of unity in $K$. Let $G$ be a group such that there exist an element
$g\in Z(G)$ of infinite order and a character $\chi\in G^*$ such
that $\chi(g)=q$. Also let $\alpha \in K$ which may be non-zero only
if $\chi^{s+1}=1$. Consider the algebra generated by the elements of
$G$ (and preserving the group multiplication on these elements), and
$x$, subject to relations
$$xh=\chi(h)hx \mbox{ for any }h\in G$$
$$x^{s+1}=\alpha (g^{s+1}-1)$$
(that is, the free or amalgamated product $K[x]*K[G]$, quotient out
by the above relations). Then this algebra has a unique Hopf algebra
structure such that the elements of $G$ are grouplike elements,
$\Delta (x)=1\otimes x+x\otimes g$, and $\varepsilon(x)=0$. We
denote this Hopf algebra
structure by $H_{\infty}(s,q,G,g,\chi,\alpha)$.\\
(II) Let $n\geq 2$ and $s>0$ be integers such that $s+1$ divides
$n$. Let $q$ be a primitive $(s+1)$th root of unity in $K$. Consider
a group $G$ such that there exist an element $g\in Z(G)$ of order
$n$ and a character $\chi\in G^*$ such that $\chi(g)=q$. Also let
$\alpha \in K$ which may be non-zero only if $\chi^{s+1}=1$.
Consider the algebra generated by the elements of $G$ (and
preserving the group multiplication on these elements), and $x$,
subject to relations
$$xh=\chi(h)hx \mbox{ for any }h\in G$$
$$x^{s+1}=\alpha (g^{s+1}-1)$$
Then this algebra has a unique Hopf algebra structure such that the
elements of $G$ are grouplike elements, $\Delta (x)=1\otimes
x+x\otimes g$, and $\varepsilon(x)=0$. We denote this Hopf algebra
structure by $H_n(s,q,G,g,\chi,\alpha)$.
\end{proposition}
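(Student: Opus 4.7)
The plan is to realize $A=H_\infty(s,q,G,g,\chi,\alpha)$ (and analogously $H_n(s,q,G,g,\chi,\alpha)$) as a central quotient of an Ore extension of $K[G]$, and then verify that the prescribed formulas extend to a Hopf algebra structure on this quotient.

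First I would note that $\sigma:K[G]\to K[G]$ determined on group elements by $\sigma(h)=\chi(h)h$ is an algebra automorphism (since $\chi$ is a character), so the Ore extension $B:=K[G][x;\sigma]$ is a well-defined algebra with normal form $\sum_i h_i x^i$, $h_i\in K[G]$, enforcing exactly the relation $xh=\chi(h)hx$. A straightforward induction gives $x^{s+1}h=\chi(h)^{s+1}hx^{s+1}$, so the hypothesis ``$\alpha\ne 0 \Rightarrow \chi^{s+1}=1$'' implies that $x^{s+1}-\alpha(g^{s+1}-1)$ commutes with every $h\in G$ (and with $x$, since $g\in Z(G)$ forces $g^{s+1}$ to be central). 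Factoring out the two-sided ideal it generates yields a nonzero algebra $A$ with free $K[G]$-basis $\{1,x,\ldots,x^s\}$. Case (II) is the same construction, with $G$ containing a central $g$ of finite order $n$; the divisibility $s+1\mid n$ is used only to guarantee that $g^{s+1}$ still has positive order, so that the hypothesis is non-vacuous.

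Second, to extend $\Delta$ from the generators $G\cup\{x\}$ to an algebra map $A\to A\otimes A$, I would verify that both defining relations are annihilated. The relation $xh=\chi(h)hx$ is handled by direct expansion, using $g\in Z(G)$. The crucial point is the power relation: in $A\otimes A$,
\[
(1\otimes x)(x\otimes g)=x\otimes xg=x\otimes qgx=q(x\otimes g)(1\otimes x),
\]
so the two summands of $\Delta(x)$ $q$-commute. Because $q$ is a primitive $(s+1)$th root of unity, the interior Gaussian binomial coefficients $\binom{s+1}{k}_q$ vanish for $1\le k\le s$, and the quantum binomial theorem collapses to
\[
\Delta(x)^{s+1}=1\otimes x^{s+1}+x^{s+1}\otimes g^{s+1}.
\]
Substituting $x^{s+1}=\alpha(g^{s+1}-1)$ and simplifying yields $\alpha(g^{s+1}\otimes g^{s+1}-1\otimes 1)=\alpha\Delta(g^{s+1}-1)$, so $\Delta$ is well-defined. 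The counit $\varepsilon|_G=1$, $\varepsilon(x)=0$ visibly kills both relations, and since $\Delta,\varepsilon$ are algebra maps satisfying coassociativity and the counit identity on the generators, they satisfy them on all of $A$.

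Third, I would construct the antipode as the anti-algebra map with $S(h)=h^{-1}$ and $S(x)=-xg^{-1}$. Compatibility with $xh=\chi(h)hx$ follows from $g\in Z(G)$ and $\chi(h^{-1})=\chi(h)^{-1}$. For the power relation, moving $g^{-1}$ past $x$ using $xg^{-1}=q^{-1}g^{-1}x$ gives $(xg^{-1})^{s+1}=q^{s(s+1)/2}x^{s+1}g^{-(s+1)}=(-1)^s x^{s+1}g^{-(s+1)}$ (the prefactor being $(-1)^s$ by a quick parity check); plugging this into $S(x)^{s+1}=(-1)^{s+1}(xg^{-1})^{s+1}$ and using the original relation reproduces $S(\alpha(g^{s+1}-1))=\alpha(g^{-(s+1)}-1)$. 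The antipode axiom on $x$ then reads $1\cdot S(x)+x\cdot S(g)=-xg^{-1}+xg^{-1}=0=\varepsilon(x)1$, and similarly on the other side and on $G$. Uniqueness of the structure is immediate, since $G\cup\{x\}$ generates $A$ and $\Delta,\varepsilon,S$ are prescribed on these generators. The main obstacle is the compatibility of $\Delta$ with the power relation: it is precisely the assumption that $q$ is a primitive $(s+1)$th root of unity that makes the $q$-binomial expansion collapse to two terms, and this is the only place in the argument where that hypothesis is truly essential.
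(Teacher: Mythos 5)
Your construction is correct and is essentially the paper's own argument: both form the Ore extension $KG[x;\sigma]$ with $\sigma(h)=\chi(h)h$, use the $q$-commutation $(1\otimes x)(x\otimes g)=q(x\otimes g)(1\otimes x)$ together with the vanishing of the interior $q$-binomial coefficients at a primitive $(s+1)$th root of unity to get $\Delta(x)^{s+1}=1\otimes x^{s+1}+x^{s+1}\otimes g^{s+1}$, and then pass to the quotient by the Hopf ideal generated by $x^{s+1}-\alpha(g^{s+1}-1)$. The only difference is that the paper invokes the universal property of Hopf--Ore extensions from \cite{bdg} where you verify $\Delta$, $\varepsilon$ and the antipode on generators by hand; this is fine, apart from the cosmetic point that when $\alpha=0$ and $\chi^{s+1}\neq 1$ the element $x^{s+1}$ is only normal rather than central, which still suffices for the quotient to be free over $KG$ with basis $1,x,\dots,x^{s}$ and for $G$ to embed as the grouplikes.
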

\begin{proof}
We consider an approach similar to the one in \cite{bdg}. For both
(I) and (II) we consider the Hopf group algebra $KG$, and its Ore
extension $KG[X,\overline{\chi}]$, where $\overline{\chi}$ is the
algebra automorphism of $KG$ such that $\overline{\chi}(h)=\chi(h)h$
for any $h\in G$. Since $g\in Z(G)$, this Ore extension has a unique
Hopf algebra structure such that $\Delta(X)=1\otimes X+X\otimes g$
and $\varepsilon(X)=0$, by the universal property for Ore extensions
(see for example \cite[Lemma 1.1]{bdg}). Since $(1\otimes
X)(X\otimes g)=q(X\otimes g)(1\otimes X)$, the quantum binomial
formula shows that $\Delta (X^{s+1})=1\otimes X^{s+1}+X^{s+1}\otimes
g^{s+1}$, so then the ideal $I=(X^{s+1}-\alpha (g^{s+1}-1))$ is in
fact a Hopf ideal of $KG[X,\overline{\chi}]$. Then we can consider
the factor Hopf algebra $KG[X,\overline{\chi}]/I$, and this is just
the desired Hopf algebra $H_{\infty}(s,q,G,g,\chi,\alpha)$ in case
(I) and $H_n(s,q,G,g,\chi,\alpha)$ in case (II). The condition that
$\alpha=0$ whenever $\chi^{s+1}\neq 1$ guarantees that the map
$G\rightarrow KG[X,\overline{\chi}]/I$ taking an element $h\in G$ to
its class modulo $I$ is injective, thus $G$ is the group of
grouplike elements of this factor Hopf algebra.
\end{proof}

 In the following example we give examples of co-Frobenius path
subcoalgebras that can be endowed with Hopf algebra structures.
Moreover, we don't only introduce one such structure, but a family
of Hopf algebra structures on each path subcoalgebra considered in
the example.

\begin{example} \label{exemplustructuri}
(i) $K[\AA_{\infty}|s]$ can be endowed with a Hopf algebra structure
for any $s\geq 1$. Indeed, let $q$ be a primitive $(s+1)$th root of
unity in K, and let $\alpha\in K$. We define a multiplication (on
basis elements, then extended linearly) on $K[\AA_{\infty}|s]$ by
%$$p_{i,i+u}p_{j,j+v}=\left\{ \begin{array}{ll} q^{ju}\,{{u+v}\choose
%{u}}_{q}\,p_{i+j,i+j+u+v},& \;{\rm if}\; u+v\leq s\\
%\alpha
%q^{ju}\frac{(u+v-s-1)_q!}{(u)_q!(v)_q!}(p_{i+j+s+1,u+v+i+j}-p_{i+j,u+v+i+j-s-1}),&\;{\rm
%if}\; u+v\geq s+1
%\end{array} \right.$$
%ALTERNATE
$$p_{i,i+u}p_{j,j+v}=\left\{ \begin{array}{l} q^{ju}\,{{u+v}\choose
{u}}_{q}\,p_{i+j,i+j+u+v},\\ \;\hspace{2cm} {\rm if}\; u+v\leq s\\
\alpha
q^{ju}\frac{(u+v-s-1)_q!}{(u)_q!(v)_q!}(p_{i+j+s+1,u+v+i+j}-p_{i+j,u+v+i+j-s-1}), \\ \;\hspace{2cm}{\rm
if}\; u+v\geq s+1
\end{array} \right.$$
where${{u+v}\choose {u}}_{q}$ denotes the $q$-binomial coefficient.
Then this multiplication makes $K[\AA_{\infty}|s]$ an algebra, which
together the initial coalgebra structure define a Hopf algebra
structure on $K[\AA_{\infty}|s]$. Indeed, we can see this by
considering the Hopf algebra
$H_{\infty}(s,q,C_{\infty},c,\chi,\alpha)$, where $C_{\infty}$ is
the (multiplicative) infinite cyclic group generated by an element
$c$, and the character $\chi$ is defined by $\chi(c)=q$. Thus
$H_{\infty}(s,q,C_{\infty},c,\chi,\alpha)$ is generated as an
algebra by the elements $c$ and $x$, subject to relations $xc=q cx$
and $x^{s+1}=\alpha (c^{s+1}-1)$, and with coalgebra structure such
that $\Delta (c)=c\otimes c$, $\varepsilon (c)=1$, and $\Delta
(x)=1\otimes x+x\otimes c$. Since $(1\otimes x)(x\otimes c)=q
(x\otimes c)(1\otimes x)$, we can apply the quantum binomial formula
and get that
$$\Delta (x^u)=\sum_{0\leq h\leq u}{u\choose
h}_{q}x^{u-h}\otimes c^{u-h}x^h$$ and then
$$\Delta \left(\frac{1}{(u)_{q}!}\, c^ix^u\right)=
\sum_{0\leq h\leq u}\frac{1}{(u-h)_{q}!}\, c^ix^{u-h}\otimes
\frac{1}{(h)_{q}!}\, c^{i+u-h}x^h$$ for any $0\leq u\leq s$ and any
integer $i$. Therefore if we denote $\frac{1}{(u)_{q}!}\, c^ix^u$ by
$P_{i,i+u}$, this means that $\Delta (P_{i,i+u})=\sum_{0\leq h\leq
u} P_{i,i+h}\otimes P_{i+h,i+u}$, showing that the linear
isomorphism $\phi:K[\AA_{\infty}|s]\rightarrow
H_{\infty}(s,q,C_{\infty},c,\chi,\alpha)$ taking $p_{i,i+u}$ to
$P_{i,i+u}$ for any $0\leq u\leq s$ and $i\in \ZZ$, is an
isomorphism of coalgebras. Now we just transfer the algebra
structure of $H_{\infty}(s,q,C_{\infty},c,\chi,\alpha)$ through
$\phi^{-1}$ and get precisely the
multiplication formula given above.\\[2mm]
(ii) Let us consider now the coalgebra $C$ which is a direct sum of
a family of copies of (the same) $K[\AA_{\infty}|s]$, indexed by a
non-empty set $P$. Then $C$ can be endowed with a Hopf algebra
structure. To see this, we extend the example from (i) as follows.
Let $G$ be a group such that there exist an element $g\in Z(G)$ of
infinite order and a character $\chi\in G^*$ for which $q=\chi(g)$
is a primitive $(s+1)$th root of unity, and moreover the factor
group $G/<g>$ is in bijection with the set $P$ (note that such a
triple $(G,g,\chi)$ always exists; we can take for instance a group
structure on the set $P$, $G=C_{\infty}\times P$, $g$ a generator of
$C_{\infty}$, and $\chi$ defined such that $\chi(g)=q$ and
$\chi(p)=1$ for any $p\in P$). For simplicity of the notation, we
can assume that $P$ is a set of representatives for the $<g>$-cosets
of G. Consider the Hopf algebra $A=H_{\infty}(s,q,G,g,\chi,\alpha)$,
where $\alpha$ is a scalar which may be non-zero only if
$\chi^{s+1}=1$. Then the subalgebra $B$ of $A$ generated by $g$ and
$x$ is a Hopf subalgebra isomorphic to $K[\AA_{\infty}|s]$ as a
coalgebra, and $A=\oplus_{p\in P}\; pB$ is a direct sum of
subcoalgebras, all isomorphic to $K[\AA_{\infty}|s]$. Thus $A$ is
isomorphic as a coalgebra to $C$,
and we can transfer the Hopf algebra structure of $A$ to $C$.\\[2mm]
(iii) Assume that $n\geq 2$ and $s+1$ divides $n$. Then $K[\CC_n,s]$
can be endowed with a Hopf algebra structure. Indeed, we proceed as
for $K[\AA_{\infty}|s]$, but replacing the Hopf algebra
$H_{\infty}(s,q,C_{\infty},c,\chi,\alpha)$ by
$H_n(s,q,C_n,c,\chi,\alpha)$, where $C_n$ is a cyclic group of order
$n$ with a generator $c$ (we have the same relations for $c$ and $x$
as in (i), to which we add $c^n=1$). Thus the multiplication of
$K[\AA_{\infty}|s]$ is given by
$$q_{\overline{i}|u}q_{\overline{j}|v}=\left\{ \begin{array}{l} q^{ju}\,{{u+v}\choose
{u}}_{q}\,q_{\overline{i+j}|u+v},\\ \hspace{1cm} \;{\rm if}\; u+v\leq s\\
\alpha
q^{ju}\frac{(u+v-s-1)_q!}{(u)_q!(v)_q!}(q_{\overline{i+j+s+1}|u+v-s-1}-q_{\overline{i+j}|u+v-s-1}),\\ \hspace{1cm}\;{\rm
if}\; u+v\geq s+1
\end{array} \right.$$
Also, as in (ii), a direct sum of copies of the same $K[\CC_n,s]$,
indexed by an arbitrary non-empty set $P$, can be endowed with a
Hopf algebra structure isomorphic to some $H_n(s,q,G,g,\chi,\alpha)$
for some $q,G,g,\chi,\alpha$, where $q$ is a primitive $(s+1)$th
root of unity, $G$ is a group, $g\in Z(G)$ is an element of order
$n$, $G/<g>$ is in bijection with $P$, $\chi \in G^*$ is a character
such that $\chi(g)=q$, and $\alpha\in K$ is a scalar which may be
non-zero only if $\chi^{s+1}=1$.\\
The examples given in (iii) appear (for finite sets $P$) in
\cite{chyz}.
\end{example}

Now we can prove the main result of this section.

\begin{theorem} \label{teoremastructuriHopf}
Assume that $K$ is a field containing primitive roots of unity of
any positive order (in particular, $K$ has characteristic $0$). Then
a co-Frobenius path subcoalgebra $C\neq 0$ can be endowed with a
Hopf algebra structure if and only if it is of one of the following
three types:\\
(I) A direct sum of copies (indexed by a set $P$) of the same
$K[\AA_{\infty}|s]$ for some $s\geq 1$. In this case, any Hopf
algebra structure on $C$ is isomorphic to a Hopf algebra of the form
$H_{\infty}(s,q,G,g,\chi,\alpha)$ for some $q,G,g,\chi,\alpha$,
where $q$ is a primitive $(s+1)$th root of unity, $G$ is a group,
$g\in Z(G)$ is an element of infinite order, $G/<g>$ is in bijection
with $P$, $\chi \in G^*$ is a character such that $\chi(g)=q$, and
$\alpha\in K$ is a
scalar which may be non-zero only if $\chi^{s+1}=1$.\\
(II) A direct sum of copies (indexed by a set $P$) of the same
$K[\CC_n,s]$ for some $n\geq 2$ and $s\geq 1$ such that $s+1$
divides $n$. In this case, any Hopf algebra structure on $C$ is
isomorphic to a Hopf algebra of the form $H_n(s,q,G,g,\chi,\alpha)$
for some $q,G,g,\chi,\alpha$, where $q$ is a primitive $(s+1)$th
root of unity, $G$ is a group, $g\in Z(G)$ is an element of order
$n$, $G/<g>$ is in bijection with $P$, $\chi \in G^*$ is a character
such that $\chi(g)=q$, and $\alpha\in K$ is a
scalar which may be non-zero only if $\chi^{s+1}=1$.\\
(III) A direct sum of copies of $K$. In this case, any Hopf algebra
structure on $C$ is isomorphic to a group Hopf algebra $KG$ for some
group $G$.
\end{theorem}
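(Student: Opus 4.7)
The plan is to prove necessity in three stages; sufficiency is already furnished by Example \ref{exemplustructuri}. First, reduce to the case where all summands of $C$ (from Corollary \ref{clascoFrobenius}) are isomorphic to a single block $C_0$. Second, read off the Hopf data $(s, q, G, c, \chi, \alpha)$ from the arrows of $C_0$ and the conjugation $G$-action. Third, match the presentation of $C$ with $H_\infty$ or $H_n$.

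For the first stage, the decomposition $C = \bigoplus_{i \in P} C_i$ of Corollary \ref{clascoFrobenius} is into indecomposable coalgebras, since each $C_i$ has indecomposable dual algebra by the Remark following that corollary. For any grouplike $g \in G$, the left multiplication $L_g \colon C \to C$ is a coalgebra automorphism, because $\Delta(gc) = (g \otimes g)\Delta(c) = (L_g \otimes L_g)\Delta(c)$. Hence $L_g$ permutes the summands, and since $G$ acts transitively on itself by left multiplication, all $C_i$ are coalgebra-isomorphic to the summand $C_0$ containing $1$. This places $C$ in one of the three coalgebra shapes (I), (II), (III). If $C_0 = K$, then $C = KG$ as coalgebras and the Hopf structure is a group algebra, proving (III).

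For the second stage, assume $C_0 = K[\AA_\infty|s]$ or $K[\CC_n,s]$ with $s \geq 1$, and pick a non-trivial $(c,1)$-skew-primitive $x \in C_0$ realizing the unique arrow out of $1$. Conjugation $\sigma_h(y) = hyh^{-1}$ is a coalgebra automorphism fixing $1$ and hence preserving $C_0$; it sends $x$ to a non-trivial $(hch^{-1},1)$-skew-primitive, which must be a scalar multiple of $x$ by uniqueness of the arrow. This forces $hch^{-1} = c$ (so $c \in Z(G)$) and defines a character $\chi \colon G \to K^*$ by $hxh^{-1} = \chi(h)^{-1} x$, equivalently $xh = \chi(h)hx$. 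Setting $q := \chi(c)$, an induction using Pascal's $q$-identity and $xc = qcx$ establishes
\begin{equation*}
\Delta(x^k) = \sum_{i=0}^{k} \binom{k}{i}_q\, x^{k-i} \otimes c^{k-i} x^i.
\end{equation*}
Matching $x^j/[j]_q!$ with the path $p_{0,j}$ of $C_0$ requires $[j]_q \neq 0$ for $1 \leq j \leq s$ and forces $x^{s+1}$ into the coradical, so $x^{s+1} = \alpha(c^{s+1} - 1)$ for some $\alpha \in K$; together these conditions are equivalent to $q$ being a primitive $(s+1)$th root of unity. Applying $xh = \chi(h)hx$ iteratively to the relation $x^{s+1} = \alpha(c^{s+1}-1)$ and using centrality of $c$ yields $\chi(h)^{s+1} = 1$ for every $h \in G$ whenever $\alpha \neq 0$.

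For the third stage, in case $C_0 = K[\AA_\infty|s]$ the grouplikes of $C_0$ are $\{c^m : m \in \ZZ\}$, so $c$ has infinite order, yielding (I); in case $C_0 = K[\CC_n,s]$ the grouplikes are $\{c^m : m \in \ZZ_n\}$, so $c$ has order $n$, and $\chi(c^n) = q^n = 1$ combined with $q$ being a primitive $(s+1)$th root of unity forces $(s+1) \mid n$, yielding (II). The universal property of the Ore extension construction of $H_\infty$ and $H_n$ then furnishes a surjective Hopf algebra map from $H_\infty(s,q,G,c,\chi,\alpha)$ (respectively $H_n(s,q,G,c,\chi,\alpha)$) onto $C$, and comparing the basis $\{hc^ix^j\}$ of the former with the coalgebra decomposition $C = \bigoplus_{h \in G/\langle c\rangle} L_h(C_0)$ shows this map is an isomorphism, identifying $P$ with $G/\langle c\rangle$. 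The main obstacle I anticipate is the rigidity argument in stage two, where the quantum binomial expansion and the $q$-commutation $xh = \chi(h)hx$ must be combined to show simultaneously that $q$ is a \emph{primitive} $(s+1)$th root of unity (not merely a root whose order divides $s+1$) and that $\chi^{s+1} = 1$ whenever $\alpha \neq 0$.
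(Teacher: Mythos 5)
Your stage-one reduction (left translation by grouplikes permutes the indecomposable summands, so all components are isomorphic to the one containing $1$) is fine, and is in fact a tidy alternative to the paper's more hands-on path argument. The genuine gap is in stage two, exactly at the point you flag as ``the main obstacle'': the step ``matching $x^j/[j]_q!$ with the path $p_{0,j}$'' is asserted, not proved, and it is the heart of the matter. What you can prove directly (by a filtration/Taft--Wilson type computation) is only one direction: \emph{if} $[j]_q\neq 0$ for $j\le m$ then $x^m\notin C_{m-1}$. Since $C=C_s$, this shows $q\neq 1$ and that the order of $q$ is at most $s+1$; but nothing in your argument excludes the possibility that $q$ has order $m$ with $2\le m\le s$. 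In that case $x^m$ is a $(1,c^m)$-skew-primitive, hence a trivial one, $x^m=\beta(c^m-1)$, and no contradiction arises unless you also know that $C$ is generated as an algebra by $G$ and $x$ --- which you never establish and which is also needed for the surjectivity of your comparison map out of $H_\infty$ (resp.\ $H_n$) in stage three. The paper supplies precisely these two missing facts via the Andruskiewitsch--Schneider lifting method: ${\rm gr}\,C\simeq R\# KG$ with $\dim R(i)=1$ for $0\le i\le s$, so $R$ is a quantum line by \cite[Theorem 3.2]{as} (see \cite[Proposition 3.4]{cdmm}), which forces $q$ to be a \emph{primitive} $(s+1)$th root of unity, and \cite[Proposition 3.1]{cdmm} provides a skew-primitive $z$ such that $C$ is generated by $z$ and $G$. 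Without this (or a substitute, e.g.\ an argument that a degree-$m$ primitive in $R$ with trivial self-braiding would generate an infinite-dimensional subalgebra), the derivation of the presentation of $C$ is incomplete.

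Two smaller omissions: (a) Corollary \ref{clascoFrobenius} allows summands of type $K[\CC_1,s]$ (one vertex with a loop), and the theorem's case (II) starts at $n\ge 2$; the paper rules this case out separately (a nonzero primitive element in characteristic $0$ has linearly independent powers, contradicting finite dimensionality of the component), while your sketch never addresses it --- it is fixable with your filtration argument since $q=\chi(1)=1$, but it must be said. (b) The conjugation step is too quick: the space of $(c,1)$-skew-primitives is $Kx\oplus K(c-1)$, so a priori $hxh^{-1}=\chi(h)^{-1}x+\mu(c-1)$; the paper eliminates the trivial term $\mu$ by a computation using commutation with $g$ (the ``$\beta=0$'' step), and multiplicativity of $\chi$ is only clear after that. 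These are repairable, but as written the necessity half of your proof does not go through.
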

\begin{proof}
By Example \ref{exemplustructuri} we see that a coalgebra of type
(I) or (II) has a Hopf algebra structure. Obviously, a coalgebra of
type (III) is a grouplike coalgebra $KX$ for some set $X$, so then
it has a Hopf algebra structure, obtained if we endow $X$
with a group structure.\\
Conversely, let $C$ be a co-Frobenius path subcoalgebra which can be
endowed with a Hopf algebra structure. By Corollary
\ref{clascoFrobenius}, $C$ is isomorphic to a direct sum of
coalgebras of types $K[\AA_{\infty}|s]$, $K[\CC_n,s]$ or $K$. We
have that $G=G(C)$, the set of all vertices of $C$, is a group with
the induced multiplication. We look at the identity element 1 of
this group and
distinguish three cases.\\
{\it Case 1.} If 1 is a vertex in a connected component of type
$K[\AA_{\infty}|s]$, denote the vertices of this connected component
by $(v_n)_{n\in \ZZ}$ such that $v_0=1$. Also denote by $a_n$ the
arrow from $v_n$ to $v_{n+1}$ for any $n\in \ZZ$. If $g=v_1$, then
$\Delta (a_1)=1\otimes a_1+a_1\otimes g$, and $a_1\notin C_0$. Then
$\Delta (ga_1)=g\otimes ga_1+ga_1\otimes g^2$, and $ga_1\notin C_0$,
so $P_{g^2,g}(C)\nsubseteq C_0$. Since the only $h\in G$ such that
$P_{h,g}(C)$ is not trivial (i.e. $\neq K(h-g)$, or equivalently,
not contained in $C_0$) is $h=v_2$, we obtain that $v_2=g^2$.
Recurrently we see that $v_n=g^n$ for any positive integer $n$, and
also for any negative
integer $n$.\\
Let us take some $h\in G$. Then $\Delta (ha_1)=h\otimes
ha_1+ha_1\otimes hg$ and $ha_1\notin C_0$, so $P_{hg,h}(C)\neq
K(hg-h)$. Hence there is an arrow  starting at $h$ and ending at
$hg$ in $C$; as before, inductively we get that there are in $C$
arrows as follows
$$ \;\;\;\; ^{\hspace{-8mm}\ldots\;\longrightarrow}
_{hg^-1}\hspace{-5mm}^{\circ\;\;\longrightarrow\;\;}
\hspace{0.1mm}^{\circ\;\;\longrightarrow}_h
\hspace{1mm}^{\circ\;\longrightarrow\;}_{hg}
\hspace{0.5mm}^{\circ\;\longrightarrow\;\dots}_{\hspace{-1mm}hg^2}$$
which shows that the vertex $h$ belongs to a connected component $D$
of type $K[\AA_{\infty}|s']$ for some $s'\geq 1$. Moreover, $\Delta
(a_1h)=h\otimes a_1h+a_1h\otimes gh$, we also have $P_{gh,h}(C)\neq
K(gh-h)$, so there is an arrow from $h$ to $gh$ in $C$. This shows
that
$hg=gh$, so then $g$ must lie in $Z(G)$. \\
If we denote by $p_{h,g^ih}$ the unique path from $h$ to $g^ih$, for
any $h\in G$ and $i\geq 0$, then
$$\Delta (p_{1,g^s})-1\otimes p_{1,g^s}-p_{1,g^s}\otimes g^s\in
C_{s-1}\otimes C_{s-1}$$ and $p_{1,g^s}\notin C_{s-1}$. Then
$$\Delta (hp_{1,g^s})-h\otimes hp_{1,g^s}-hp_{1,g^s}\otimes hg^s\in
C_{s-1}\otimes C_{s-1}$$ and $hp_{1,g^s}\notin C_{s-1}$. But it is
easy to check that in the path coalgebra $K\Gamma$ (whose
subcoalgebra is $C$) the relation $\Delta(c)-h\otimes c-c\otimes
hg^s\in (K\Gamma)_{s-1}\otimes (K\Gamma)_{s-1}$ holds if and only if
$c\in (K\Gamma)_{s-1}+Kp_{h,hg^s}$. Applying this for
$c=hp_{1,g^s}\notin C_{s-1}$, we obtain that $hp_{1,g^s}=c'+\gamma
p_{h,hg^s}$ for some $c'\in (K\Gamma)_{s-1}$ and $\gamma \in K^*$.
This shows that $p_{h,hg^s}$ must be in $C$, so it also lies in $D$,
 which implies that $s'\geq s$ (otherwise $D$ cannot have
paths of length $s$). \\
Similarly, since
$$\Delta (h^{-1}p_{h,hg^{s'}})-1\otimes h^{-1}p_{h,hg^{s'}}-h^{-1}p_{h,hg^{s'}}\otimes g^{s'}\in
C_{s'-1}\otimes C_{s'-1}$$ and $h^{-1}p_{h,hg^{s'}}\notin C_{s'-1}$,
we obtain that $s\geq s'$. In conclusion $s'=s$, and $C$ is a direct
sum of coalgebras isomorphic to
$K[\AA_{\infty}|s]$. Moreover, this direct sum is indexed by a set in bijection with $G/<g>$. \\
In order to uncover the Hopf algebra structures on $C$, we use the
Lifting Method proposed in \cite{as}. Since $C_0=K\Gamma$ is a Hopf
subalgebra of $C$, the coradical filtration $C_0\subseteq
C_1\subseteq \ldots$ of $C$ is a Hopf algebra filtration, and we can
consider the associated graded space ${\rm gr}\, C=C_0\oplus
\frac{C_1}{C_0}\oplus \ldots$, which has a graded Hopf algebra
structure. Denote $H=K\Gamma$, the degree 0 component of ${\rm gr}\,
C$, and by $\gamma:H\rightarrow {\rm gr}\, C$ the inclusion
morphism. The natural projection $\pi:{\rm gr}\, C\rightarrow H$ is
a Hopf algebra morphism. Then the coinvariants $R=({\rm gr}\,
C)^{co\, H}$ with respect to the right $H$-coaction induced via
$\pi$, i.e.
$$R=\{ z\in {\rm gr}\, C\;|\; (I\otimes \pi)\Delta (z)=z\otimes
1\}$$ is a left Yetter-Drinfeld module over $H$, with left
$H$-action defined by $h\cdot r=\sum \gamma(h_1)rS(\gamma(h_2))$ for
any $h\in H, r\in R$, and left $H$-coaction $\delta (r)=\sum
r_{(-1)}\otimes r_{(0)}=(\pi \otimes I)\Delta(r)$. Moreover, $R$ is
a graded subalgebra of ${\rm gr}\, C$, with grading denoted by
$R=\oplus_{n\geq 0}R(n)$, and it also has a coalgebra structure with
comultiplication $\Delta_R(r)=\sum r^{(1)}\otimes r^{(2)}=\sum
r_1\gamma\pi (S(r_2))\otimes r_3$, and these make $R$ a braided Hopf
algebra in the category $^H_HYD$ of Yetter-Drinfeld modules over
$H$. The Hopf algebra ${\rm gr}\, C$ can be reconstructed from $R$
by bosonization, i.e. ${\rm gr}\, C\simeq R\# H$, the biproduct of
$R$ and $H$. The multiplication of this biproduct is the smash
product given by $(r\# h)(p\# v)=\sum r(h_1\cdot p)\# h_2v$, while
the comultiplication is the smash coproduct $\Delta (r\# h)=\sum
(r^{(1)}\# (r^{(2)})_{(-1)}h_1)\otimes
(r^{(2)})_{(0)}\# h_2$.\\
Since in our case $C_i$ is the span of all paths of length at most
$i$ in $C$, if $z=\hat{c}\in R(n)$, then $c=\sum_i \alpha_ip_i$, a
linear combination of paths $p_i$ of length $i$, and
$\sum_i\alpha_i\widehat{p_i}\otimes
t(p_i)=\sum_i\alpha_i\widehat{p_i}\otimes 1$. Then $\alpha_i=0$ for
any $i$ such that $t(p_i)\neq 1$, showing that $R(i)$ is spanned by
the classes of the paths of length $i$ which end at $1$. We conclude
that $R(i)$ has dimension 1 for any $0\leq i\leq s$, and ${\rm
dim}(R)=s+1$. By \cite[Theorem 3.2]{as} (see also \cite[Proposition
3.4]{cdmm}) $R$ is isomorphic to a quantum line, i.e. $R\simeq
R_q(H,v,\chi)$ for some primitive $(s+1)$'th root of unity $q$, an
element $v\in G$ and a character $\chi\in G^*$ such that
$\chi(v)=q$, and $\chi(h)hv=\chi(h)vh$ for any $h\in G$, i.e. $v\in
Z(G)$ (we use the notation of \cite[Section 2]{cdmm}). As an algebra
we have $R_q(H,v,\chi)=K[y]/(y^{s+1})$, and the coalgebra structure
is such that the elements $d_0=1,d_1=y,
d_2=\frac{y^2}{(2)_q!},\ldots,\frac{y^s}{(s)_q!}$ form a divided
power sequence, i.e. $\Delta (d_i)=\sum_{0\leq j\leq i}d_j\otimes
d_{i-j}$ for any $0\leq i\leq s$. The $H$-action on $R_q(H,v,\chi)$
is such that $h\cdot y=\chi(h)y$ for any $h\in G$,
and the $H$-coaction is such that $y\mapsto v\otimes y$. \\
By \cite[Proposition 3.1]{cdmm}, there exists a
$(1,v)$-skew-primitive $z$ in $C$, which is not in $C_0$, such that
$vz=qzv$, $C$ is generated as an algebra by $z$ and $G$, and the
class $\hat{z}$ in $\frac{C_1}{C_0}$ corresponds to the element
$y\#1$ in $R_q(H,v,\chi)\# H$ via the isomorphism ${\rm gr}\,
C\simeq R_q(H,v,\chi)\# H$. It follows that $v$ must be $g^{-1}$.
Since for $h\in G$ both $zh$ and $hz$ are
$(h,g^{-1}h)$-skew-primitives, we must have $zh=\lambda hz+\beta
(g^{-1}h-h)$ for some scalars $\lambda$ and $\beta$. But
$zhg=(\lambda hz+\beta(g^{-1}h-h))g=q\lambda hgz+\beta (h-hg)$, and
on the other hand $zgh=qgzh=q\lambda ghz+q\beta (h-hg)$, showing
that $\beta=0$. Thus $zh=\lambda hz$, and passing to ${\rm gr}\, C$,
this gives $\hat{z}h=\lambda h\hat{z}$. But in $R_q(H,v,\chi)\# H$
we have that $(1\# h)(y\# 1)=\chi (h) (y\# 1)(1\# h)$, so
$\lambda=\chi(h)$. Therefore $zh=\chi (h)hz$. Replace the generator
$z$ by $x=gz$, which is a $(g,1)$-skew-primitive. By the quantum
binomial formula we see that $\Delta (x^{s+1})=1\otimes
x^{s+1}+x^{s+1}\otimes g^{s+1}$, so then $x^{s+1}=\alpha
(g^{s+1}-1)$ for some scalar $\alpha$. Since
$x^{s+1}h=\chi(h)^{s+1}hx^{s+1}$, we see that if $\chi^{s+1}\neq 1$,
then $\alpha$ must be zero. Now it is clear that $C\simeq
H_{\infty}(s,q^{-1},G,g,\chi,\alpha)$.

\vspace{.4cm}

{\it Case 2.} If 1 is a vertex in a connected component $D$ of type
$K[\CC_1,s]$, with $s\geq 1$, then let $x$ be the arrow from 1 to 1,
which is a primitive element, i.e. $\Delta(x)=x\otimes 1+1\otimes
x$. Then $gx\notin C_0$ and  $\Delta(gx)=gx\otimes g+g\otimes gx$
for any $g\in G$, so there is an arrow from $gx$ to $gx$. This shows
that $C$ must be a direct sum of coalgebras of type $K[\CC_1,s']$
(for possible different values of $s'$). Then looking at
$\Delta(x^i)-x^i\otimes 1-1\otimes x^i$, it is easy to show by
induction that $x^i$ lies in $D$ for any $i\geq 1$. Since $x$ is a
non-zero primitive element, the set $(x^i)_{i\geq 1}$ is linearly
independent, a contradiction to the finite dimensionality
of $D$. Thus this situation cannot occur.\\
If 1 is a vertex in a connected component of type $K[\CC_n,s]$, with
$n\geq 2$, the proof goes as in Case 1, and leads us to the
conclusion that $C$ is a direct sum of coalgebras isomorphic to
$K[\CC_n,s]$, and that $C$ is isomorphic as a Hopf algebra to one of
the form $H_n(s,q,G,g,\chi,\alpha)$. The only difference is that
instead of using the paths $p_{h,g^ih}$, we deal with paths denoted
by $p_{h|l}$, and meaning the path of length $l$ starting at the
vertex $h$. Also, since $\chi(g)=q$, a $(s+1)$'th root of unity, and
$g^n=1$, $s+1$ must divide $n$.

\vspace{.4cm}

{\it Case 3.} If 1 is a vertex in a connected component of type $K$,
then proceeding as in Case 1, we can see that there are no arrows in
$C$, so $C$ is a direct sum of copies of $K$. Thus $C$ is a
grouplike coalgebra, and Hopf algebra structures on $C$ are just
group Hopf algebras.
\end{proof}

\vspace{.4cm}

We note that the above theorem completely classifies finite
dimensional Hopf algebras whose underlying algebras are quotients of
finite dimensional path algebras by ideals generated by paths, or
whose underlying coalgebras are path subcoalgebras. These are the
algebras $KG$, $H_n(s,q,G,g,\chi,\alpha)$ and their duals, because a
finite dimensional Hopf algebra is Frobenius as an algebra and
co-Frobenius as a coalgebra.

\bigskip\bigskip\bigskip

\begin{center}
\sc Acknowledgment
\end{center}
The research of the first and the third authors was supported by
Grant ID-1904, contract 479/13.01.2009 of CNCSIS. For the second
author, this work was supported by the strategic grant
POSDRU/89/1.5/S/58852, Project ``Postdoctoral programe for training
scientific researchers'' cofinanced by the European Social Fund
within the Sectorial Operational Program Human Resources Development
2007-2013. %It was also supported in part by grant TE ID-45, no. 88/02.08.2010 of CNCSIS.

\bigskip\bigskip\bigskip

\end{document}